\renewcommand{\leq}{\leqslant}
\renewcommand{\geq}{\geqslant}
\def\subsection{\@startsection{subsection}{3}%
  \z@{.5\linespacing\@plus.7\linespacing}{.3\linespacing}%
  {\bfseries\centering}}
\def\subsubsection{\@startsection{subsubsection}{3}%
  \z@{.5\linespacing\@plus.7\linespacing}{.3\linespacing}%
  {\centering}}
\def\myfnt{\ifx\protect\@typeset@protect\expandafter\footnote\else\expandafter\@gobble\fi}
\theoremstyle{definition}
\newtheorem{theorem}{Theorem}[section]
\newtheorem{definition}[theorem]{Definition}
\newtheorem{lemma}[theorem]{Lemma}
\newtheorem{proposition}[theorem]{Proposition}
\newtheorem{example}[theorem]{Example}
\newtheorem{corollary}[theorem]{Corollary}
\newtheorem{question}[theorem]{Question}
\newcounter{claimcounter}
\numberwithin{claimcounter}{theorem}
\newenvironment{claim}{\stepcounter{claimcounter}{\noindent {\bf Claim \theclaimcounter.}}}{}
\newenvironment{claimproof}[1]{\noindent{{\em Proof.}}\space#1}{\hfill $\rule{0.35em}{0.35em}$}
\newcommand{\pureindep}[1][]{%
  \mathrel{
    \mathop{
      \vcenter{
        \hbox{\oalign{\noalign{\kern-.3ex}\hfil$\vert$\hfil\cr
              \noalign{\kern-.7ex}
              $\smile$\cr\noalign{\kern-.3ex}}}
      }
    }\displaylimits_{#1}
  }
}
\newcommand{\indep}[2]{%
  \mathrel{
    \mathop{
      \vcenter{
        \hbox{%
\oalign{
\noalign{\kern-.3ex}\hfil$\vert$\hfil\cr
              \noalign{\kern-.7ex}
              $\smile$\cr\noalign{\kern-.3ex}
}
}
      }
}^{\!\!\!\!\!#2}_{\!\!\hspace{-0.1em}#1}
  }
}
\newcommand{\displayindep}[2]{%
  \mathrel{
    \mathop{
      \vcenter{
        \hbox{%
\oalign{
\noalign{\kern-.3ex}\hfil$\vert$\hfil\cr
              \noalign{\kern-.7ex}
              $\smile$\cr\noalign{\kern-.3ex}
}
}
      }
}^{\!\!\hspace{-0.1em}#2}_{\!\!\hspace{-0.1em}#1}
  }
}
\newcommand{\displayfindep}[2]{%
  \mathrel{
    \mathop{
      \vcenter{
        \hbox{%
\oalign{
\noalign{\kern-.3ex}\hfil$\vert$\hfil\cr
              \noalign{\kern-.7ex}
              $\smile$\cr\noalign{\kern-.3ex} 
}
}
      }
}^{\!\hspace{-0.14em}#2}_{\!\!\hspace{-0.05em}#1}
  }
}
\newcommand{\spanindep}[1][]{\indep{#1}{\langle \rangle}}
\newcommand{\aclindep}[1][]{\indep{#1}{\mathrm{acl}}}
\newcommand{\clindep}[1][]{\indep{#1}{\mathrm{cl}}}
\newcommand{\forkindep}[1][]{\indep{#1}{\mathrm{f}}}
\newcommand{\ortindep}[1][]{\indep{#1}{\mathrm{ort}}}
\newcommand{\displayforkindep}[1][]{\displayfindep{#1}{\mathrm{f}}\!\hspace{-0.15em}}
\newcommand{\botc}[1]{~\bot_{#1}~}
\def\boto{\ \bot\ }
\def\presuper#1#2%
\begin{document}

\title{Independence Logic and Abstract Independence Relations}
\thanks{The research of the author was supported by the Finnish Academy of Science and Letters (Vilho, Yrj\"o and Kalle V\"ais\"al\"a foundation) and grant TM-13-8847 of CIMO. The author would like to thank above all Tapani Hyttinen, but also John Baldwin, \AA sa Hirvonen, and Jouko V\"a\"an\"anen for useful conversations related to this paper. The author would also like to thank the referee for his careful reading of the paper, his corrections and his suggestions.}

\author{Gianluca Paolini}
\address{Department of Mathematics and Statistics,  University of Helsinki, Finland}

\date{}
\maketitle

\begin{abstract} 
	
	We continue the work on the relations between independence logic and the model-theoretic analysis of independence, generalizing the results of \cite{PaoliniVaananen1} to the framework of abstract independence relations for an arbitrary $\mathrm{AEC}$. We give a model-theoretic interpretation of the independence atom and characterize under which conditions we can prove a completeness result with respect to the deductive system that axiomatizes independence in team semantics and statistics.
	
\end{abstract}


\section{Introduction}

	In mathematics and model theory the concepts of dependence and independence are of crucial importance, it is in fact always in function of an independence calculus that a classification theory for a class of classes of structures is developed. For this reason, the notions of dependence and independence are objects of intense study in the model-theoretic community. Three main frameworks in which (in)dependence has been studied are: pregeometries, first-order theories and abstract elementary classes ($\mathrm{AECs}$). Table~\ref{Independence in model theory} lists the most important cases of (in)dependence studied in these contexts. 
	
	Recently, V\"a\"an\"anen \cite{vaananen} developed a logical approach to the notions of dependence and independence, establishing a general theory of (in)dependence that goes under the name of {\em dependence logic}. Dependence logic provides an abstract characterization of (in)dependence, which accounts for the way dependence and independence behave in several disciplinary fields, e.g. database theory and statistics. 

		\begin{table}[h]
$$\begin{array}{c|c|c}
	\text{Pregeometries} & \text{Forking Indep.} & \text{Indep. in $\mathrm{AECs}$} \\
	\hline
	\hline
	\text{Vector} & \text{$\omega$-stable} & \text{$\aleph_0$-stable} \\
	\text{spaces} & \text{theories} & \text{homogeneous}  \\
		\hline
	\text{Alg. closed} & \text{Stable} & \text{Excellent} \\
	\text{fields} & \text{theories} & \text{classes}  \\
		\hline
	\text{Graphs} & \text{Simple} & \text{Finitary} \\
	\text{} & \text{theories} & \text{$\mathrm{AECs}$}  \\
	\hline
	\hline
	\end{array}
$$\caption{Independence in model theory}\label{Independence in model theory}\end{table}
	In \cite{PaoliniVaananen1} the cases of (in)dependence occurring in pregeometries and $\omega$-stable theories were also shown to be instances of this theory. We now generalize these results to the other cases of independence listed in Table~\ref{Independence in model theory}. We work in the framework of abstract independence relations for an arbitrary abstract elementary class, which subsumes most of the cases of independence of interest in model theory.
	
	The key feature of the family of logics studied in dependence logic is the presence of logical atoms different from the equational one. Each kind of atom corresponds to a different notion of (in)dependence, and each logic in the family is characterized by the logical atoms present in the syntax. This makes the study of the atomic level of the (in)dependence logics of great relevance, as indeed this is the added layer of expressivity that these systems have at disposal. This study often results in the analysis of the implication problem for a set of atoms of a particular form. That is, the search for a complete deductive system for these atoms. Emblematic examples are the axiomatizations of functional dependence and stochastic independence due to Armstrong \cite{DBLP:conf/ifip/Armstrong74} and Geiger, Paz and Pearl \cite{geiger1991axioms}, respectively.
	
	Our specific aim in this paper is the solution of the implication problem for the independence atom under a model-theoretic interpretation. This analysis was initiated in \cite{PaoliniVaananen1}, where several (in)dependence atoms were shown to have natural model-theoretic counterparts. In the present study we deal exclusively with the independence atom $\vec{x} \boto \vec{y}$ and, only marginally, with its conditional version $\vec{x} \botc{\vec{z}} \vec{y}$. 
	
	In Section~\ref{abs_indep_rel} we set the stage, defining what is an abstract elementary class and what is the axiomatization of independence to which we refer. We also give the principal examples of independence, among which {\em forking independence} in a simple theory, and {\em pregeometric independence} in an $\mathrm{AEC}$ with a uniform pregeometric operator. 
	In Section~\ref{indep_seq} we introduce a particular class of independence relations, which we call {\em federated}. We show that these are a generalization of the way independence behave in vector spaces, algebraically closed fields, and abelian groups. We then focus on its pregeometric version, and show that any $\omega$-homogenous non-trivial pregeometry is federated (modulo a finite localization). Thus, we use this result to deduce that in any first-order stable theory that admits non-trivial regular types forking independence is federated (over some set of parameters).
	In Section~\ref{indep_logic} we use the theory developed in Section~\ref{indep_seq} to characterize under which conditions we can prove a completeness result with respect to the deductive system that axiomatizes independence in team semantics and statistics, giving a complete answer to the motivating question of the paper. 
		
\section{Abstract Independence Relations}\label{abs_indep_rel}

	To make clear the levels of generalization at which we work, we first define what is an abstract independence relation in the context of first-order theories, and then generalize this definition to the context of abstract elementary classes.

\subsection{Abstract Independence Relations in First-Order Theories}

	We refer to the framework of \cite{baldwin_fun_sta_th} and \cite{adler}. We fix some notation. $AB$ is shorthand for $A \cup B$. For a complete first-order theory $T$, we denote by $\mathfrak{M}$ its monster model. 
	
	\begin{definition} Let $T$ be a complete theory and $\pureindep$ a ternary relation between (bounded) subsets of the monster model $\mathfrak{M}$. We say that $\pureindep$ is a {\em pre-independence relation} if it satisfies the following axioms.
	\begin{enumerate}[$(a.)$]
		\item (Invariance) If $A \pureindep[C] B$ and $f \in \mathrm{Aut(\mathfrak{M})}$, then $f(A) \pureindep[f(C)] f(B)$.
		\item (Existence) $A \pureindep[A] B$, for any $A, B \subseteq \mathfrak{M}$.
		\item (Monotonicity) If $A \pureindep[C] B$ and $D \subseteq A$, then $D \pureindep[C] B$.
		\item (Base Monotonicity) Let $D \subseteq C \subseteq B$. If $A \pureindep[D] B$, then $A \pureindep[C] B$.
		\item (Symmetry) If $A \pureindep[C] B$, then $B \pureindep[C] A$.
		\item (Transitivity) Let $D \subseteq C \subseteq B$. If $B \pureindep[C] A$ and $C \pureindep[D] A$, then $B \pureindep[D] A$.
		\item (Normality) If $A \pureindep[C] B$, then $AC \pureindep[C] B$.
		\item (Finite Character) If $A_0 \pureindep[C] B$ for all finite $A_0 \subseteq A$, then $A \pureindep[C] B$.
		\item (Anti-Reflexivity) If $A \pureindep[B] A$, then $A \pureindep[B] C$ for any $C \subseteq \mathfrak{M}$.

\end{enumerate}
If in addition $\pureindep$ satisfies the following two axioms, then we say that $\pureindep$ is an {\em independence relation}.
\begin{enumerate}[$(a.)$]\setcounter{enumi}{9}	
		\item (Extension) If $A \pureindep[C] B$ and $B \subseteq D$, then there is $f \in \mathrm{Aut}(\mathfrak{M})$ fixing $BC$ pointwise such that $f(A) \pureindep[C] D$.
		\item (Local Character) There is a cardinal $\kappa(T)$ such that for every $\vec{a} \in \mathfrak{M}^{< \omega}$ and $B \subseteq \mathfrak{M}$ there is $C \subseteq B$ with $|C| < \kappa(T)$ and $\vec{a} \pureindep[C] B$.

\end{enumerate}

\end{definition}

	In this context we do not distinguish between finite sets and finite sequences. Thus, if $A = \left\{ a_0, ..., a_{n-1} \right\}$, $B = \left\{ b_0, ..., b_{m-1} \right\}$ and $C = \left\{ c_0, ..., c_{k-1} \right\}$, we may write $a_0 \cdots a_{n-1} \pureindep[c_0 \cdots c_{k-1}] b_0 \cdots b_{m-1}$ instead of $A \pureindep[C] B$. By Transitivity we will refer to the following (a-priori) stronger property.

	\begin{proposition}[Transitivity]\label{trans_fo} $A \pureindep[C] B$ and $A \pureindep[CB] D$ if and only if $A \pureindep[C] BD$.
		
\end{proposition}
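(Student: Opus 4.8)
The plan is to prove the two implications separately, reducing everything to the listed pre-independence axioms; no use of Extension, Finite Character, or Local Character will be needed. Throughout I would freely invoke Symmetry~$(e)$ to pass between the left and right arguments, which in particular yields the \emph{right-hand} versions of Monotonicity and Normality that are not stated directly: from $A \pureindep[C] B$ and $D \subseteq B$ one gets $A \pureindep[C] D$ (symmetrize, apply $(c)$, symmetrize back), and from $A \pureindep[C] B$ one gets $A \pureindep[C] BC$ (symmetrize, apply $(g)$, symmetrize back). Recording these two derived forms as preliminary observations makes both directions into short axiom chases.

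For the forward direction I would assume $A \pureindep[C] B$ and $A \pureindep[CB] D$ and aim for $A \pureindep[C] BD$; by Symmetry it suffices to produce $BD \pureindep[C] A$, which I would obtain from $BCD \pureindep[C] A$ by Monotonicity $(c)$. The key is to apply the Transitivity axiom $(f)$ along the chain $C \subseteq BC \subseteq BCD$ with $A$ fixed on the right. Its two hypotheses are manufactured as follows: from $A \pureindep[C] B$, Symmetry and Normality give $BC \pureindep[C] A$; from $A \pureindep[CB] D$, Symmetry gives $D \pureindep[BC] A$ and Normality then gives $BCD \pureindep[BC] A$. Feeding the pair $BCD \pureindep[BC] A$ and $BC \pureindep[C] A$ into $(f)$ yields $BCD \pureindep[C] A$, as required.

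For the converse I would assume $A \pureindep[C] BD$. The first conclusion $A \pureindep[C] B$ is immediate from right-hand Monotonicity, since $B \subseteq BD$. The second conclusion $A \pureindep[CB] D$ is the delicate point, and I expect it to be the main obstacle: Base Monotonicity $(d)$ can only raise the base \emph{inside} the current right-hand argument, and $CB$ need not be contained in $BD$. To fix this I would first enlarge the right-hand side by right-hand Normality to get $A \pureindep[C] BCD$; now $CB \subseteq BCD$, so Base Monotonicity applies along $C \subseteq CB \subseteq BCD$ to give $A \pureindep[CB] BCD$; finally right-hand Monotonicity trims the right-hand argument down to $D \subseteq BCD$, yielding $A \pureindep[CB] D$. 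The whole argument thus hinges on inserting $C$ into the right-hand side before Base Monotonicity is legal, and on routing each Monotonicity and Normality step through Symmetry, since only their left-argument forms are postulated.
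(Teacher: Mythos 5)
Your proof is correct and follows essentially the same route as the paper's: both directions are handled by the identical axiom chase (Symmetry--Normality to set up the Transitivity axiom along $C \subseteq BC \subseteq BCD$ for one implication, and Normality to insert $C$ into the right-hand side before applying Base Monotonicity for the other). Your explicit derivation of the right-hand forms of Monotonicity and Normality via Symmetry is a point the paper leaves implicit, but it is the same argument.
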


	\begin{proof} For the direction ($\Rightarrow$), suppose that $A \pureindep[C] BD$. We have that $A \pureindep[C] B$ by Monotonicity. Furthermore, by Symmetry and Normality we have that $A \pureindep[C] BCD$ and so, by Base Monotonicity, $A \pureindep[CB] BCD$. Thus, by Monotonicity, $A \pureindep[CB] D$. For the direction ($\Leftarrow$), suppose that $A \pureindep[C] B$ and $A \pureindep[CB] D$. By Symmetry, $B \pureindep[C] A$ and $D \pureindep[CB] A$, so, by Normality, $CB \pureindep[C] A$ and $DCB \pureindep[CB] A$. Thus, by Transitivity (the axiom), $DCB \pureindep[C] A$ and so, by Monotonicity and Symmetry, $A \pureindep[C] BD$.
		
\end{proof}

	The following principle will be of crucial importance in Section~\ref{indep_logic}.

\begin{corollary}[Exchange]\label{exchange_fo} If $A \pureindep[D] B$ and $AB \pureindep[D] C$, then $A \pureindep[D] BC$.

\end{corollary}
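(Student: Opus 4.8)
The plan is to reduce the statement to the (already proved) Proposition~\ref{trans_fo}. Recall that the goal is $A \pureindep[D] BC$, and that the ($\Leftarrow$) direction of Proposition~\ref{trans_fo} tells us this follows from the two facts $A \pureindep[D] B$ and $A \pureindep[DB] C$. The first of these is exactly the first hypothesis of the Corollary, so the entire content of the argument is to extract the second fact, $A \pureindep[DB] C$, from the remaining hypothesis $AB \pureindep[D] C$. Conceptually, this is the step of \emph{absorbing the side $B$ into the base $D$}: we start with $B$ sitting on the left together with $A$, and we must move it into the base while keeping $A$ on the left and $C$ on the right.

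The concrete route I would take is as follows. Starting from $AB \pureindep[D] C$, apply Normality to obtain $ABD \pureindep[D] C$, and then Symmetry to get $C \pureindep[D] ABD$; the point of these two moves is to make $B$ (and $D$) appear on the right-hand side, so that Base Monotonicity becomes applicable. Since $D \subseteq DB \subseteq ABD$, Base Monotonicity now enlarges the base from $D$ to $DB$, yielding $C \pureindep[DB] ABD$. Finally, one more application of Symmetry gives $ABD \pureindep[DB] C$, and Monotonicity (using $A \subseteq ABD$) trims the left-hand side down to $A$, producing the desired $A \pureindep[DB] C$. Combining this with the hypothesis $A \pureindep[D] B$ and invoking Proposition~\ref{trans_fo} then delivers $A \pureindep[D] BC$.

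The only real subtlety — and hence the step I expect to require the most care — is the bookkeeping of \emph{which} side each axiom acts upon: Monotonicity contracts the left-hand set, while Base Monotonicity enlarges the base but only within the right-hand set, so the data must be shuffled by repeated use of Symmetry before each of these can be applied. There is no deep obstacle once the correct target, $A \pureindep[DB] C$, is identified; the essential insight is simply that Exchange is, after moving $B$ into the base, nothing more than the ($\Leftarrow$) direction of Transitivity.
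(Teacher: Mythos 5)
Your proof is correct and follows essentially the same route as the paper: both arguments reduce Exchange to the $(\Leftarrow)$ direction of Proposition~\ref{trans_fo} after first establishing $A \pureindep[DB] C$. The only cosmetic difference is that the paper obtains $A \pureindep[DB] C$ by citing the $(\Rightarrow)$ direction of Proposition~\ref{trans_fo} applied to $C \pureindep[D] AB$, whereas you re-derive it directly from Normality, Symmetry, Base Monotonicity and Monotonicity --- which are exactly the manipulations used in the paper's proof of that direction.
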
 

\begin{proof} \[ \begin{array}{rcl}
 &\;\;\, A \pureindep[D] B  \text{ and } AB \pureindep[D] C          &  \\
        											  &  \Downarrow & \\
 &\;\;\, A \pureindep[D] B  \text{ and } C \pureindep[D] AB          &  \\
  													  &  \Downarrow & \\
 &\;\;\, A \pureindep[D] B  \text{ and } C \pureindep[DB] A        &  \;\;\;[\text{by Transitivity}] \\
													  &  \Downarrow &  \\
 &\;\;\, A \pureindep[D] B  \text{ and } A \pureindep[DB] C          & \;\;\;[\text{by Symmetry}] \\
  													  &  \Downarrow & \\
  													  &  A \pureindep[D] BC & \;\;\;[\text{by Transitivity}].
\end{array}
 \]

\end{proof}

	In the following two orthogonal examples (not generalizing each others), that cover a broad class of first-order theories. The second one is by far the most important example of independence that has ever been formulated, the original definition is due to Shelah \cite{shelah}.
	
	\begin{example}[Independence in $o$-minimal theories \cite{pillay_o-minimal}] Let $T$ be an $o$-minimal theory. For $A, B, C \subseteq \mathfrak{M}$, define $A \aclindep[B] C$ if for every $\vec{a} \in A$ we have that $\mathrm{dim}_{\mathrm{acl}}(\vec{a}/B \cup C) = \mathrm{dim}_{\mathrm{acl}}(\vec{a}/B)$. Then $\aclindep$ is a pre-independence relation.
		
\end{example}

		\begin{example}[Forking in simple theories \cite{kim}] Let $T$ be a simple theory. For $A, B, C \subseteq \mathfrak{M}$, define $A \forkindep[B] C$ if for every $\vec{a} \in A$ we have that $\mathrm{tp}(\vec{a}/B \cup C)$ is a non-forking extension of $\mathrm{tp}(\vec{a}/B)$. Then $\forkindep$ is an independence relation.
		
\end{example}

\subsection{Abstract Independence Relations in Abstract Elementary Classes}

	The axiomatization of independence that we gave in the previous section does not refer to any intrinsically first-order property, it thus makes sense to generalize it to the context of abstract elementary classes \cite{shelah_abstr_ele_cla}. First of all we define what an abstract elementary class is and what are the analog of the first-order notions of amalgamation and joint embedding.
	
	\begin{definition}[Abstract Elementary Class \cite{shelah_abstr_ele_cla}]\label{def_indep_first_order}  Let $\mathbf{K}$ be a class of structures in the vocabulary $L$ and $\preccurlyeq$ a binary relation on $\mathbf{K}$. We say that $(\mathbf{K}, \preccurlyeq)$ is an {\em abstract elementary class} ($\mathrm{AEC}$) if the following conditions are satisfied.
		\begin{enumerate}[(1)]
			\item $\mathbf{K}$ and $\preccurlyeq$ are closed under isomorphisms.
			\item If $\mathcal{A} \preccurlyeq \mathcal{B}$, then $\mathcal{A}$ is an $L$-submodel of $\mathcal{B}$.
			\item The relation $\preccurlyeq$ is a partial order on $\mathbf{K}$.
			\item If $(\mathcal{A}_i)_{i < \delta}$ is an increasing continuous $\preccurlyeq$-chain, then:
			\begin{enumerate}[({4.}1)]
				\item $\bigcup_{i < \delta} \mathcal{A}_i \in \mathbf{K}$;
				\item for each $j < \delta$, $\mathcal{A}_j \preccurlyeq \bigcup_{i < \delta} \mathcal{A}_i$;
				\item if each $\mathcal{A}_j \preccurlyeq \mathcal{B}$, then $\bigcup_{i < \delta} \mathcal{A}_i \preccurlyeq \mathcal{B}$ \; (Smoothness Axiom).
	\end{enumerate}
			\item If $\mathcal{A}, \mathcal{B}, \mathcal{C} \in \mathbf{K}$, $\mathcal{A} \preccurlyeq \mathcal{C}$, $\mathcal{B} \preccurlyeq \mathcal{C}$ and $\mathcal{A} \leq \mathcal{B}$, then $\mathcal{A} \preccurlyeq \mathcal{B}$ \; (Coherence Axiom).
			\item There is a L\"owenheim-Skolem number $\mathrm{LS}(\mathbf{K}, \preccurlyeq)$ such that if $\mathcal{A} \in \mathbf{K}$ and $B \subseteq A$, then there is $\mathcal{C} \in \mathbf{K}$ such that $B \subseteq C$, $\mathcal{C} \preccurlyeq \mathcal{A}$ and $|C| \leq |B| + |L| + \mathrm{LS}(\mathbf{K}, \preccurlyeq)$ \; (Existence of LS-number).
	
\end{enumerate}
		
\end{definition}

	\begin{definition} If $\mathcal{A}, \mathcal{B} \in \mathbf{K}$ and $f: \mathcal{A} \rightarrow \mathcal{B}$ is an embedding such that $f(\mathcal{A}) \preccurlyeq \mathcal{B}$, then we say that $f$ is a $\preccurlyeq$-embedding.
	
\end{definition}
	
	Let $\lambda$ be a cardinal. We let $\mathbf{K}_{\lambda} = \left\{ \mathcal{A} \in \mathbf{K} \; | \; |A| = \lambda \right\}$.

	\begin{definition}\label{def_AP} Let $(\mathbf{K}, \preccurlyeq)$ be an $\mathrm{AEC}$. 
		\begin{enumerate}[(i)]
			\item We say that $(\mathbf{K}, \preccurlyeq)$ has the {\em amalgamation property} $(\mathrm{AP})$ if for any $\mathcal{A}, \mathcal{B}_0, \mathcal{B}_1 \in \mathbf{K}$ with $\mathcal{A} \preccurlyeq \mathcal{B}_i$ for $i < 2$, there are $\mathcal{C} \in \mathbf{K}$ and $\preccurlyeq$-embeddings $f_i: \mathcal{B}_i \rightarrow \mathcal{C}$ for $i < 2$, such that $f_0 \restriction A = f_1 \restriction A$.
			\item We say that $(\mathbf{K}, \preccurlyeq)$ has the {\em joint embedding property} $(\mathrm{JEP})$ if for any $\mathcal{B}_0, \mathcal{B}_1 \in \mathbf{K}$ there are $\mathcal{C} \in \mathbf{K}$ and $\preccurlyeq$-embeddings $f_i: \mathcal{B}_i \rightarrow \mathcal{C}$ for $i < 2$.
			\item We say that $(\mathbf{K}, \preccurlyeq)$ has arbitrarily large models $(\mathrm{ALM})$ if for every $\lambda \geq \mathrm{LS}(\mathbf{K})$, $\mathbf{K}_{\lambda} \neq \emptyset$.
			
	\end{enumerate}
		
\end{definition}

	If $(\mathbf{K}, \preccurlyeq)$ has $\mathrm{AP}$, $\mathrm{JEP}$ and $\mathrm{ALM}$, then, using the same technique as in the elementary case, we can build a monster model for $(\mathbf{K}, \preccurlyeq)$. Consistent with the notation used for the elementary case, we denote this model by $\mathfrak{M}$. We are now in the position to generalize Definition~\ref{def_indep_first_order} to the context of $\mathrm{AECs}$. Also in this case we distinguish between pre-independence and independence relations. In our study we will work only at the level of pre-independence, but we consider worth mentioning what are (some of) the further axioms that are required in order to develop a classification theory for the $\mathrm{AEC}$ under examination. 
	
	\begin{definition} Let $(\mathbf{K}, \preccurlyeq)$ be an $\mathrm{AEC}$ with $\mathrm{AP}$, $\mathrm{JEP}$ and $\mathrm{ALM}$, and $\pureindep$ a ternary relation between (bounded) subsets of the monster model $\mathfrak{M}$. We say that $\pureindep$ is a {\em pre-independence relation} if it satisfies the following axioms.
		\begin{enumerate}[$(a.)$]
			\item (Invariance) If $A \pureindep[C] B$ and $f \in \mathrm{Aut(\mathfrak{M})}$, then $f(A) \pureindep[f(C)] f(B)$.
			\item (Existence) $A \pureindep[A] B$, for any $A, B \subseteq \mathfrak{M}$.
			\item (Monotonicity) If $A \pureindep[C] B$ and $D \subseteq A$, then $D \pureindep[C] B$.
			\item (Base Monotonicity) Let $D \subseteq C \subseteq B$. If $A \pureindep[D] B$, then $A \pureindep[C] B$.
			\item (Symmetry) If $A \pureindep[C] B$, then $B \pureindep[C] A$.
			\item (Transitivity) Let $D \subseteq C \subseteq B$. If $B \pureindep[C] A$ and $C \pureindep[D] A$, then $B \pureindep[D] A$.
			\item (Normality) If $A \pureindep[C] B$, then $AC \pureindep[C] B$.
			\item (Finite Character) If $A_0 \pureindep[C] B$ for all finite $A_0 \subseteq A$, then $A \pureindep[C] B$.
			\item (Anti-Reflexivity) If $A \pureindep[B] A$, then $A \pureindep[B] C$ for any $C \subseteq \mathfrak{M}$.
			
\end{enumerate}
	If in addition $\pureindep$ satisfies the following two axioms, then we say that $\pureindep$ is an {\em independence relation}.
	\begin{enumerate}[$(a.)$]\setcounter{enumi}{9}	
					\item (Extension) If $A \pureindep[C] B$ and $B \subseteq D$, then there is $f \in \mathrm{Aut}(\mathfrak{M})$ fixing $BC$ pointwise such that $f(A) \pureindep[C] D$.
					\item (Local Character) There is a cardinal $\kappa(\mathbf{K})$ such that for every $\vec{a} \in \mathfrak{M}^{< \omega}$ and $B \subseteq \mathfrak{M}$ there is $C \subseteq B$ with $|C| < \kappa(\mathbf{K})$ and $\vec{a} \pureindep[C] B$.

		\end{enumerate}
		
\end{definition}

	As in the previous section, by Transitivity we will refer to the following (a-priori) stronger property.
	
	\begin{proposition}[Transitivity] $A \pureindep[C] B$ and $A \pureindep[CB] D$ if and only if $A \pureindep[C] BD$.

\end{proposition}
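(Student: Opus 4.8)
The plan is to observe that this statement is word-for-word identical to Proposition~\ref{trans_fo}, and that the pre-independence axioms $(a.)$--$(i.)$ for an $\mathrm{AEC}$ are syntactically the same as those fixed in the first-order setting. Since the proof of Proposition~\ref{trans_fo} invokes only Monotonicity, Symmetry, Normality, Base Monotonicity and the Transitivity axiom --- none of which mentions any intrinsically first-order feature of $\mathfrak{M}$ --- the very same derivation goes through here verbatim. So I would simply replay that argument.

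For the implication from $A \pureindep[C] BD$ to the conjunction $A \pureindep[C] B$ and $A \pureindep[CB] D$, I would first obtain $A \pureindep[C] B$ by cutting down the right-hand side (Symmetry to swap sides, Monotonicity to drop $D$, Symmetry back). For the second conjunct, I would apply Symmetry and Normality to pass from $A \pureindep[C] BD$ to $A \pureindep[C] BCD$, then Base Monotonicity (using $C \subseteq CB \subseteq BCD$) to obtain $A \pureindep[CB] BCD$, and finally Monotonicity to reduce $BCD$ to $D$, yielding $A \pureindep[CB] D$.

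For the converse, starting from $A \pureindep[C] B$ and $A \pureindep[CB] D$, I would apply Symmetry and then Normality to rewrite these as $CB \pureindep[C] A$ and $DCB \pureindep[CB] A$. The crux is the Transitivity axiom applied to the chain $C \subseteq CB \subseteq DCB$: from $DCB \pureindep[CB] A$ and $CB \pureindep[C] A$ it delivers $DCB \pureindep[C] A$. A final pass of Monotonicity and Symmetry then gives $A \pureindep[C] BD$.

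I do not expect any genuine obstacle, since the result is a purely formal consequence of the axioms. The only points requiring care are bookkeeping ones: Monotonicity as stated only shrinks the left-hand argument, so each time I want to drop something on the right I must sandwich the step between two applications of Symmetry; and I must check that the containment hypotheses of Base Monotonicity and of the Transitivity axiom (namely $C \subseteq CB \subseteq BCD$ and $C \subseteq CB \subseteq DCB$, respectively) are met, which they are by construction.
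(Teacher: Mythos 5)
Your proposal is correct and follows exactly the paper's route: the paper's own proof of this proposition is simply ``As in Proposition~\ref{trans_fo}'', and your step-by-step derivation reproduces that first-order argument (Monotonicity, Symmetry, Normality, Base Monotonicity, and the Transitivity axiom applied along the chain $C \subseteq CB \subseteq DCB$) verbatim. Your explicit bookkeeping of the Symmetry sandwiches needed to shrink the right-hand argument is, if anything, slightly more careful than the paper's own write-up.
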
 

	\begin{proof} As in Proposition~\ref{trans_fo}.
		
\end{proof}

	\begin{corollary}[Exchange] If $A \pureindep[D] B$ and $AB \pureindep[D] C$, then $A \pureindep[D] BC$.

\end{corollary}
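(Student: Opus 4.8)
The plan is to transcribe the proof of Corollary~\ref{exchange_fo} essentially word for word, observing that that derivation invoked only Symmetry together with the strengthened Transitivity of the preceding Proposition. Both of these are available for a pre-independence relation over an arbitrary $\mathrm{AEC}$: Symmetry is axiom $(e.)$, and the Proposition-form of Transitivity has just been established in the $\mathrm{AEC}$ setting. Since no intrinsically first-order feature entered the original argument, the same chain of implications should go through unchanged.

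Concretely, starting from the two hypotheses $A \pureindep[D] B$ and $AB \pureindep[D] C$, I would first apply Symmetry to the second hypothesis to obtain $C \pureindep[D] AB$. Reading $AB$ as the set $BA$ and applying the $(\Rightarrow)$ direction of the Transitivity Proposition to $C \pureindep[D] BA$ (with $C$ in the role of the first argument, $D$ the base, $B$ the adjoined set and $A$ the last argument), I would extract $C \pureindep[DB] A$. A further application of Symmetry then yields $A \pureindep[DB] C$.

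At this point I would combine the still-unused first hypothesis $A \pureindep[D] B$ with $A \pureindep[DB] C$ and invoke the $(\Leftarrow)$ direction of the Transitivity Proposition to conclude $A \pureindep[D] BC$, as desired.

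There is essentially no obstacle here: the only point requiring a moment's care is the tacit use of the fact that $AB$ denotes the union $A \cup B = B \cup A$, so that applying Transitivity on the $A$-side of $C \pureindep[D] BA$ is legitimate. The entire content of the statement lies in having already secured the strengthened Transitivity over an arbitrary $\mathrm{AEC}$; granting that, Exchange follows mechanically, exactly as in the first-order case.
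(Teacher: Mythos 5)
Your proof is correct and matches the paper exactly: the paper's proof of this corollary simply says ``As in Corollary~\ref{exchange_fo},'' and your step-by-step chain (Symmetry, the $(\Rightarrow)$ direction of the strengthened Transitivity, Symmetry again, then the $(\Leftarrow)$ direction) is precisely the derivation given there in the first-order case, transferred verbatim.
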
 

	\begin{proof} As in Corollary~\ref{exchange_fo}.
		
\end{proof}
	
	If $T$ is a complete first-order theory and we denote by $\preccurlyeq$ the relation of elementary substructure, then the pair $(\mathbf{Mod}(T), \preccurlyeq)$ is an $\mathrm{AEC}$ with $\mathrm{AP}$, $\mathrm{JEP}$ and $\mathrm{ALM}$. Thus all the cases of independence examined in the previous section are instances of this more general definition. Furthermore, the generality at which we work allow us to subsume also the non-elementary cases of independence.
	
	\begin{example}[Independence in Pregeometries \cite{grossberg_lessmann}] Let $(\mathbf{K}, \preccurlyeq)$ be an $\mathrm{AEC}$ with $\mathrm{AP}$, $\mathrm{JEP}$ and $\mathrm{ALM}$, and $\mathrm{cl}: \mathfrak{M} \rightarrow \mathfrak{M}$ a pregeometric operator. For $A, B, C \subseteq \mathfrak{M}$, define $A \clindep[B] C$ if for every $\vec{a} \in A$ we have $\mathrm{dim}_{\mathrm{cl}}(\vec{a}/B \cup C) = \mathrm{dim}_{\mathrm{cl}}(\vec{a}/B)$. Then $\clindep$ is a pre-independence relation.
		
\end{example}

	\begin{example}[Hilbert Spaces]\label{hilbert_spaces} Let $\mathbf{K}$ be the class of Hilbert Spaces over $\mathbb{R}$ (resp. $\mathbb{C}$) and $\preccurlyeq$ the closed linear subspace relation, then $(\mathbf{K}, \preccurlyeq)$ is an $\mathrm{AEC}$ with $\mathrm{AP}$, $\mathrm{JEP}$ and $\mathrm{ALM}$. Given a closed linear subspace $C \subseteq \mathfrak{M}$ and $a \in \mathfrak{M}$, we denote by $\mathrm{P}_C (a)$ the orthogonal projection of $a$ onto $C$. For $D, B \subseteq \mathfrak{M}$, we then say that $D \ortindep[A] B$ if for every $a \in D$ and $b \in B$ we have $\mathrm{P}_{A^{\bot}} (a) \boto \mathrm{P}_{A^{\bot}} (b)$. Then $\ortindep$ is orthogonal over $A$ for any $A \subseteq \mathfrak{M}$. 

\end{example}
	
	\begin{example}[Independence in Finitary $\mathrm{AECs}$] See \cite{hyttinen_and_kesala}.
		
\end{example}

\section{Federation}\label{indep_seq}

	We introduce two fundamental notions: {\em independent sequences} and {\em algebraic tuples}. Independent sequences play a fundamental role in classification theory, where they often occur in the form of sequences of indiscernibles.

	\begin{definition}[Independent Sequence] Let $\pureindep$ be a pre-independence relation and $(I, <)$ a linear order. Let $A \subseteq \mathfrak{M}$ and $(a_i \; | \; i \in I) \in \mathfrak{M}^{I}$ injective. We say that $(a_i \; | \; i \in I)$ is an $\pureindep$-{\em independent sequence} over $A$ if for all $j \in I$, we have $( a_i \; | \; i < j ) \pureindep[A] a_j$. We say that $(a_i \; | \; i \in I)$ is an $\pureindep$-independent sequence if it is an $\pureindep$-independent sequence over $\emptyset$.
	
\end{definition}

	\begin{definition}[Algebraic Tuple]	Let $\pureindep$ a pre-independence relation. We say that $\vec{e} \in \mathfrak{M}^{< \omega}$ is $\pureindep$-{\em algebraic} over $A$ if $\vec{e} \pureindep[A] \vec{e}$. We say that $\vec{e} \in \mathfrak{M}^{< \omega}$ is $\pureindep$-algebraic if it is $\pureindep$-algebraic over $\emptyset$.
		
\end{definition}

	When it is clear to which pre-independence relation $\pureindep$ we refer, we just talk of independent sequences and algebraic tuples. 

	\begin{lemma}\label{lemma_indep_seq} Let $\pureindep$ be a pre-independence relation and $(a_i \; | \; i \in I) \in \mathfrak{M}^{I}$ an independent sequence over $A$. Then for all $\vec{a}, \vec{b} \subseteq (a_i \; | \; i \in I) \in \mathfrak{M}^{I}$ with $\vec{a} \cap \vec{b} = \emptyset$ we have $\vec{a} \pureindep[A] \vec{b}$.
		
\end{lemma}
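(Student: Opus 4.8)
The plan is to reduce the statement to a combinatorial induction on a finite subsequence and then peel off one element at a time by means of the strong form of Transitivity (the Transitivity proposition above). Throughout, $A$ denotes the base over which the sequence is independent.

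First I would reduce to the case in which $\vec{a}$ and $\vec{b}$ are finite. To prove $\vec{a} \pureindep[A] \vec{b}$ for arbitrary disjoint subsets, fix a finite $\vec{a}_0 \subseteq \vec{a}$; by Finite Character it suffices to show $\vec{a}_0 \pureindep[A] \vec{b}$, for which, by Symmetry, it is enough to show $\vec{b} \pureindep[A] \vec{a}_0$, and a second application of Finite Character reduces this to $\vec{b}_0 \pureindep[A] \vec{a}_0$ for all finite $\vec{b}_0 \subseteq \vec{b}$. So I may assume $\vec{a},\vec{b}$ finite, say $\vec{a} = (a_i \mid i \in S)$ and $\vec{b} = (a_j \mid j \in T)$ with $S,T \subseteq I$ finite and disjoint. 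Writing $u_0 < \cdots < u_{m-1}$ for an increasing enumeration of $U := S \cup T$, I would first note that the subsequence indexed by $U$ is again independent over $A$: by definition $(a_i \mid i < u_k) \pureindep[A] a_{u_k}$, and since $\{a_{u_0},\dots,a_{u_{k-1}}\} \subseteq (a_i \mid i < u_k)$, Monotonicity gives $(a_{u_0},\dots,a_{u_{k-1}}) \pureindep[A] a_{u_k}$. Relabelling these elements $c_0,\dots,c_{m-1}$, the goal becomes: for every partition $\{0,\dots,m-1\} = P \sqcup Q$ one has $(c_i \mid i \in P) \pureindep[A] (c_j \mid j \in Q)$, our original claim being the instance $P = S$, $Q = T$. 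I would also record the auxiliary fact $\vec{e} \pureindep[A] \emptyset$ for every $\vec{e}$: from $A \pureindep[A] \vec{e}$ (Existence), Monotonicity gives $\emptyset \pureindep[A] \vec{e}$, and Symmetry gives $\vec{e} \pureindep[A] \emptyset$; together with Symmetry this settles every case in which one side is empty.

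The core is an induction on $m$, the case $m = 0$ being the auxiliary fact. For the inductive step abbreviate $\vec{p} = (c_i \mid i \in P)$, $\vec{q} = (c_j \mid j \in Q)$, $\vec{q}' = (c_j \mid j \in Q')$. The top index $m-1$ lies in $P$ or in $Q$, and by Symmetry I may assume $m-1 \in Q$; put $Q' = Q \setminus \{m-1\}$, so $P \sqcup Q'$ partitions $\{0,\dots,m-2\}$. The induction hypothesis yields $\vec{p} \pureindep[A] \vec{q}'$. The independent-sequence property gives $(c_0,\dots,c_{m-2}) \pureindep[A] c_{m-1}$, i.e. (by Symmetry) $c_{m-1} \pureindep[A] \vec{q}'\vec{p}$; the forward direction of the Transitivity proposition then detaches the base to give $c_{m-1} \pureindep[A\vec{q}'] \vec{p}$, and Symmetry turns this into $\vec{p} \pureindep[A\vec{q}'] c_{m-1}$. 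Finally, feeding $\vec{p} \pureindep[A] \vec{q}'$ and $\vec{p} \pureindep[A\vec{q}'] c_{m-1}$ into the reverse direction of the Transitivity proposition yields $\vec{p} \pureindep[A] \vec{q}'c_{m-1} = \vec{p} \pureindep[A] \vec{q}$, completing the step.

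The main, if modest, obstacle is the bookkeeping in the inductive step: one must orient the tuples so that the single element being removed sits on the right, invoke Symmetry to legitimately assume the top index lies in $Q$, and apply the Transitivity proposition in its two opposite directions — first to absorb $\vec{q}'$ into the base, then to recombine. Everything else is routine Monotonicity, Symmetry and Finite-Character manipulation, and all degenerate cases in which $P$ or $Q'$ is empty are covered by the auxiliary fact $\vec{e} \pureindep[A] \emptyset$.
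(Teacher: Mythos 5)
Your proof is correct and follows essentially the same route as the paper's: peel off the element with the largest index from the right-hand tuple, apply the induction hypothesis to what remains, and recombine. The only cosmetic differences are that you induct on the total number of elements rather than on the maximal index, and that your Symmetry--Transitivity--Symmetry--Transitivity recombination is exactly the paper's Exchange corollary, which it cites directly instead of unfolding.
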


	\begin{proof} It suffices to show that for $\vec{a} = (a_{k_0}, ..., a_{k_{n-1}})$ and $\vec{b} = (a_{j_0}, ..., a_{j_{m-1}})$ with $k_0 < \cdots < k_{n-1}$, $j_0 < \cdots < j_{m-1}$ and $\vec{a} \cap \vec{b} = \emptyset$, we have that $\vec{a} \pureindep[A] \vec{b}$. We prove this by induction on $\mathrm{max}(k_{n-1}, j_{m-1}) = t$. 
\newline $t = 0$. If this is the case, then either $\overline{a} = \emptyset$ or $\overline{b} = \emptyset$ because $\vec{a} \cap \vec{b} = \emptyset$. Suppose the first, the other case is symmetrical. By Existence $A \pureindep[A] \overline{b}$, and so, by Monotonicity, $\emptyset \pureindep[A] \overline{b}$.
\newline $t > 0$. Suppose that $t = j_{m-1}$, the other case is symmetrical. By the independence of the sequence and Monotonicity, it follows that 
	\[ a_{k_0} \cdots a_{k_{n-1}} b_{j_0} \cdots b_{j_{m-2}} \pureindep[A] b_{j_{m-1}}.\]
	Notice now that $\mathrm{max}(k_{n-1}, j_{m-2}) < t$ because $\vec{a} \cap \vec{b} = \emptyset$, thus by induction hypothesis we have that
	\[ a_{k_0} \cdots a_{k_{n-1}} \pureindep[A] b_{j_0} \cdots b_{j_{m-2}}. \]
	Hence by Exchange we can conclude that 
	\[ a_{k_0} \cdots a_{k_{n-1}} \pureindep[A] b_{j_0} \cdots b_{j_{m-1}}. \]
		
\end{proof}

	\begin{corollary} Let $\pureindep$ be a pre-independence relation and $(a_i \; | \; i \in I) \in \mathfrak{M}^{I}$ be an independent sequence over $A$, then for every $\vec{a} \subseteq (a_i \; | \; i \in I)$, we have $\vec{a} \pureindep[A] ( a_i \; | \; i \in I ) - \vec{a}$.
		
\end{corollary}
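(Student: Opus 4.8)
The plan is to deduce this corollary from Lemma~\ref{lemma_indep_seq} by reducing the infinite statement $\vec{a} \pureindep[A] (a_i \mid i \in I) - \vec{a}$ to the finite statements that the lemma already provides, and then invoking Finite Character to pass back to the infinite setting. First I would fix a finite subtuple $\vec{a} = (a_{k_0}, \dots, a_{k_{n-1}}) \subseteq (a_i \mid i \in I)$; since Finite Character (axiom $h$) lets us verify an independence $\vec{a} \pureindep[A] B$ by checking it for all finite subtuples of $\vec{a}$, and here $\vec{a}$ is already finite, the only infinite object remaining is the right-hand side $B = (a_i \mid i \in I) - \vec{a}$.

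The next step is to handle the infinite right-hand side by Symmetry together with Finite Character. By Symmetry (axiom $e$), the desired conclusion $\vec{a} \pureindep[A] B$ is equivalent to $B \pureindep[A] \vec{a}$. Now I would apply Finite Character to the tuple $B$: it suffices to show $\vec{b} \pureindep[A] \vec{a}$ for every finite subtuple $\vec{b} \subseteq B$. Fix such a finite $\vec{b}$. Since $\vec{b} \subseteq (a_i \mid i \in I) - \vec{a}$, we have $\vec{a} \cap \vec{b} = \emptyset$, so both $\vec{a}$ and $\vec{b}$ are finite disjoint subtuples of the independent sequence. Hence Lemma~\ref{lemma_indep_seq} applies directly and gives $\vec{a} \pureindep[A] \vec{b}$, and by Symmetry $\vec{b} \pureindep[A] \vec{a}$, as required.

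Assembling these pieces: for each finite $\vec{b} \subseteq B$ we obtain $\vec{b} \pureindep[A] \vec{a}$ from the lemma and Symmetry, so Finite Character yields $B \pureindep[A] \vec{a}$, and a final application of Symmetry produces $\vec{a} \pureindep[A] B = (a_i \mid i \in I) - \vec{a}$. This treats the case where $\vec{a}$ is a finite subtuple. For a general (possibly infinite) $\vec{a}$ one runs one further layer of Finite Character on the left-hand side, reducing to finite subtuples $\vec{a}_0 \subseteq \vec{a}$ and noting that $(a_i \mid i \in I) - \vec{a} \subseteq (a_i \mid i \in I) - \vec{a}_0$, so the finite case combined with Monotonicity (axiom $c$, applied after Symmetry to shrink the right-hand side) closes the argument.

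I do not expect a serious obstacle here: the corollary is essentially Lemma~\ref{lemma_indep_seq} plus the bookkeeping needed to move between finite and infinite index sets. The one point requiring mild care is the direction in which Finite Character is invoked — the axiom as stated controls the \emph{left} argument of $\pureindep$, so passing to finite pieces of the \emph{right} argument $B$ must be mediated by Symmetry, as above. Everything else is a routine combination of Symmetry, Monotonicity, and Finite Character.
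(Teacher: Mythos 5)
Your proof is correct and follows essentially the same route as the paper, which simply states that the corollary ``follows from Lemma~\ref{lemma_indep_seq} by Finite Character''; you have merely spelled out the bookkeeping (in particular the use of Symmetry to apply Finite Character to the right-hand argument, and the extra layer of Finite Character plus Monotonicity for an infinite $\vec{a}$) that the paper leaves implicit.
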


	\begin{proof} Follows from Lemma~\ref{lemma_indep_seq} by Finite Character.
		
\end{proof}

	We define the notion of {\em federation}. This notion is a generalization of the notion of federated pregeometry introduced in \cite{baldwin_abs_dep_rel}. For an independent sequence to be federated we ask the existence of a point which is {\em dependent} from all the members of the sequence, but independent from all but one. It can be thought as a strong form of independence. We denote by $\omega^*$ the set $\omega - \left\{ 0 \right\}$.

	\begin{definition}[Federation] Let $\pureindep$ be a pre-independence relation, $n \in \omega^*$ and $(a_i \, | \, i < n) \in \mathfrak{M}^n$ an independent sequence over $A$. We say that $(a_i \, | \, i < n)$ is federated over $A$ if there exists $d \in \mathfrak{M}$ such that
		\[ d \not\!\pureindep[A] a_0 \cdots a_{n-1} \;\; \text{ and } \;\; d \, \pureindep[A] (a_i \, | \, i < n ) - a_j \,  \text{ for every $j < n$}. \] 
		We say that $(a_i \, | \, i < n)$ is federated if it is federated over $\emptyset$. For $(a_i \, | \, i < \omega) \in \mathfrak{M}^\omega$ independent (over $A$), we say that $(a_i \, | \, i < \omega)$ is federated (over $A$) if $(a_i \, | \, i < n)$ is federated (over $A$) for every $n \in \omega^*$.
		
\end{definition}
	
\begin{definition} Let $\pureindep$ be a pre-independence relation and $A \subseteq \mathfrak{M}$. We define the index of federation of $\pureindep$ over $A$, in symbols $\mathrm{IF}(\pureindep; A)$, as 
	\[\mathrm{sup}\left\{ n \in \omega^* \, | \, \text{ there is } (a_i \, | \, i < n) \in \mathfrak{M}^n \text{ federated over } A \right\}. \]
	We say that $\pureindep$ is {\em federated} over $A$ if $\mathrm{IF}(\pureindep; A) = \omega$. We say that $\pureindep$ is federated if it is federated over $\emptyset$.
\end{definition}

	Clearly, the easiest way to show that a particular pre-independence relation is federated is to find a federated sequence of length $\omega$ in the monster model. This will be our way to establish the federation of a pre-independence relation. 
	
\subsection{Federated Pregeometries}\label{alg_pregeo}

	In the following three important examples of federated independent relations.

	\begin{example}[Vector spaces \cite{PaoliniVaananen1}] Let $\mathrm{VS}^{\mathrm{inf}}_{\mathbb{K}}$ denote the theory of infinite vector spaces over a fixed field $\mathbb{K}$. Let $\langle \rangle: \mathfrak{M} \rightarrow \mathfrak{M}$ be such that $A \mapsto \langle A \rangle$, i.e. the linear span of $A$, then $\langle \rangle$ is a pregeometric operator. Notice that the theory $\mathrm{VS}^{\mathrm{inf}}_{\mathbb{K}}$ is superstable (if $\mathbb{K}$ is countable it actually is $\omega$-stable) and strongly minimal. Furthermore, the span operator coincides with the algebraic closure operator. Thus in this case we have that $\spanindep \, = \, \aclindep \, = \, \forkindep$. Let $A \subseteq \mathfrak{M}$ be such that $\mathrm{dim}(A) = \aleph_0$ and let $(a_i \; | \; i \in \omega)$ be an injective enumeration of a basis $B$ for $A$ in $\mathfrak{M}$, then $(a_i \; | \; i \in \omega)$ is a federated sequence. Notice that $0 \in \mathfrak{M}$ is an algebraic point.
	
\end{example}

	\begin{example}[Algebraically closed fields \cite{PaoliniVaananen1}] Let $\mathrm{ACF}_p$ denote the theory of algebraically closed fields of characteristic $p$, where $p$ is either $0$ or a prime. Let $\mathrm{acl}: \mathfrak{M} \rightarrow \mathfrak{M}$ be such that $A \mapsto \mathrm{acl}(A)$, i.e. the algebraic closure of $A$, then $\mathrm{acl}$ is a pregeometric operator. Notice that the theory $\mathrm{ACF}_p$ is $\omega$-stable and, furthermore, it is strongly minimal, thus in this case we have that $\aclindep \, = \, \forkindep$. Let $A \subseteq \mathfrak{M}$ be such that $\mathrm{dim}(A) = \aleph_0$ and let $(a_i \; | \; i \in \omega)$ be an injective enumeration of a basis $B$ for $A$ in $\mathcal{K}$, then $(a_i \; | \; i \in \omega)$ is a federated sequence. Notice that any member of the prime field of $\mathfrak{M}$ is an algebraic point.
	
\end{example}

 	\begin{example}[Abelian groups]\label{pure_subgroup_pregeometry} Let $\mathbf{K}$ be the class of abelian groups. Given $\mathcal{G}, \mathcal{H} \in \mathbf{K}$ we say that $\mathcal{G}$ is a {\em pure subgroup} of $\mathcal{H}$ if $\mathcal{G}$ is a subgroup of $\mathcal{H}$ and for every $g \in G$ and $n < \omega$, the equation $nx = g$ is solvable in $\mathcal{G}$, whenever it is solvable in $\mathcal{H}$.  Let $\preccurlyeq_{\mathrm{pure}}$ be the pure subgroup relation, then the class $(\mathcal{K}, \preccurlyeq_{\mathrm{pure}})$ is an $\mathrm{AEC}$ with $\mathrm{AP}$, $\mathrm{JEP}$ and $\mathrm{ALM}$. Let $\langle \rangle_P: \mathfrak{M} \rightarrow \mathfrak{M}$ be such that $A \mapsto \langle A \rangle_P = \left\{ b \in \mathfrak{M} \; | \; \exists n \in \omega^* \text{ with } \; nb \in \langle A \rangle \right\}$, i.e. the pure subgroup generated by $A$, then $\langle \rangle_P$ is a pregeometric operator. Let $A \subseteq \mathfrak{M}$ be such that $\mathrm{dim}(A) = \aleph_0$ and let $(a_i \; | \; i \in \omega)$ be an injective enumeration of a basis $B$ for $A$ in $\mathfrak{M}$, then $(a_i \; | \; i \in \omega)$ is a federated sequence. Notice that $0 \in \mathfrak{M}$ is an algebraic point.
		
\end{example}

	The three examples above are instances of a general pregeometric phenomenon, namely {\em federation}. This is the notion considered in \cite{baldwin_abs_dep_rel}, which we generalized to an arbitrary pre-independence relation.

	\begin{definition}[Federated Pregeometry] Let $(X, \mathrm{cl})$ be a pregeometry. We say that the pregeometry is {\em federated} if for every independent $D_0 \subseteq_{\omega} X$, $\mathrm{cl}(D_0) \neq \bigcup_{D \subsetneq D_0} \mathrm{cl}(D)$.

\end{definition}

	In infinite dimensional federated pregeometries we can always find federated sequences of length $\omega$.

	\begin{example} Let $(\mathbf{K}, \preccurlyeq)$ be an $\mathrm{AEC}$ with $\mathrm{AP}$, $\mathrm{JEP}$ and $\mathrm{ALM}$, and $\mathrm{cl}: \mathfrak{M} \rightarrow \mathfrak{M}$ a pregeometric operator such that it determines a federated pregeometry. Let $A \subseteq \mathfrak{M}$ be such that $\mathrm{dim}(A) = \aleph_0$, and $(a_i \; | \; i \in \omega)$ an injective enumeration of a basis $B$ for $A$ in $\mathfrak{M}$, then $(a_i \; | \; i \in \omega)$ is a federated sequence. Notice that if there exists $e \in \mathrm{cl}(\emptyset)$, then $e$ is an algebraic point.
		
\end{example}

	We conclude this section with an important characterization of federated pregeometries.
	
	\begin{definition} Let $(X, \mathrm{cl})$ be a pregeometry.
		
		\begin{enumerate}[i)]
			\item  We say that $(X, \mathrm{cl})$ is trivial if $\mathrm{cl}(A) = \bigcup_{a \in A} \mathrm{cl}(\left\{ a \right\})$ for every $A \subseteq X$.
			\item  We say that $(X, \mathrm{cl})$ is {\em $\omega$-homogeneous} if for every $A \subseteq_{\omega} X$ and $a, b \in X - \mathrm{cl}(A)$ there is $f \in \mathrm{Aut((X, \mathrm{cl}) / A)}$ such that $f(a) = b$.
		\end{enumerate}
		
\end{definition}

	Clearly federated pregeometries are non-trivial, more interestingly under the assumption of $\omega$-homogeneity we also have the following partial converse.
	
\makeatletter 	
\begingroup
\apptocmd{\thetheorem}{\unless\ifx\protect\@unexpandable@protect\protect\footnote{This theorem is due to Tapani Hyttinen. The proof is given with his permission.}\fi}{}{}

	\begin{theorem}[Hyttinen]\label{non-triviality_implies_algebraicity} Let $(X, \mathrm{cl})$ be an $\omega$-homogeneous pregeometry. If $(X, \mathrm{cl})$ is non-trivial, then there exists $A_0 \subseteq_{\omega} X$ such that $(X, \mathrm{cl}_{A_0})$ is federated.
	
\end{theorem}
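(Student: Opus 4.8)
The plan is to reduce the statement to the construction of a single ``triangle'', i.e.\ a federated independent pair, inside a suitable finite localization, and then to bootstrap this one configuration up to independent sets of every finite size, using $\omega$-homogeneity together with the submodularity of the pregeometry dimension.

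First I would unwind non-triviality into a minimal witnessing configuration. Since $(X,\mathrm{cl})$ is non-trivial there are $B \subseteq X$ and $c \in \mathrm{cl}(B) \setminus \bigcup_{b \in B}\mathrm{cl}(\{b\})$; by finite character of $\mathrm{cl}$ we may take $B$ finite, and then shrink it to a subset $A' = \{a_0, \dots, a_{k-1}\}$ that is minimal with $c \in \mathrm{cl}(A')$. Minimality gives $c \notin \mathrm{cl}(A' - a_i)$ for every $i$, which in turn forces $A'$ to be $\mathrm{cl}$-independent (if $a_i \in \mathrm{cl}(A'-a_i)$ then $\mathrm{cl}(A') = \mathrm{cl}(A'-a_i) \ni c$), and the hypothesis $c \notin \mathrm{cl}(\{b\})$ forces $k \geq 2$. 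Setting $A_0 = \{a_2, \dots, a_{k-1}\}$ (empty when $k=2$) and writing $\mathrm{cl}_{A_0}(\cdot) = \mathrm{cl}(\cdot \cup A_0)$, a direct check using monotonicity shows that in $(X, \mathrm{cl}_{A_0})$ the pair $\{a_0, a_1\}$ is independent while $c \in \mathrm{cl}_{A_0}(a_0 a_1)$, $c \notin \mathrm{cl}_{A_0}(a_0)$, $c \notin \mathrm{cl}_{A_0}(a_1)$ and $c \notin \mathrm{cl}_{A_0}(\emptyset)$; that is, $\{a_0, a_1\}$ is a federated pair in the localized pregeometry. I would also record that $(X, \mathrm{cl}_{A_0})$ is again $\omega$-homogeneous, since an automorphism of $(X,\mathrm{cl})$ fixing $A_0$ pointwise is exactly an automorphism of $(X,\mathrm{cl}_{A_0})$, and homogeneity of the latter follows by applying homogeneity of $(X,\mathrm{cl})$ to finite sets containing $A_0$.

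The core of the argument, and the step I expect to be the main obstacle, is the following bootstrapping claim: an $\omega$-homogeneous pregeometry possessing a federated pair is federated. Here I would first use $\omega$-homogeneity to move single points successively and conclude that any two independent $m$-tuples over a fixed finite base are conjugate by an automorphism fixing the base; consequently it suffices to produce, for each $m$, one independent $m$-set carrying a federation witness, since the witness is then transported to every other independent $m$-set. The witnesses are built by induction on size. The cases of size $\leq 2$ are the federated pair and a trivial singleton; for the inductive step, given an independent set $\{r_0, \dots, r_m\}$ of size $m+1 \geq 3$ I would take a federation witness $c$ for $\{r_1, \dots, r_m\}$ (induction), observe that $\{r_0, c\}$ is independent, and apply homogeneity to transport the federated pair onto $\{r_0, c\}$, obtaining $e \in \mathrm{cl}(r_0 c)$ with $e \notin \mathrm{cl}(r_0)$, $e \notin \mathrm{cl}(c)$ and $e \notin \mathrm{cl}(\emptyset)$. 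That this $e$ is a federation witness for $\{r_0, \dots, r_m\}$ is where the submodular inequality $\dim(U \vee V) + \dim(U \cap V) \leq \dim(U) + \dim(V)$ does the work: comparing the rank-$2$ flat $\mathrm{cl}(r_0 c)$ with each codimension-one flat $\mathrm{cl}(\{r_0, \dots, r_m\} - r_j)$, submodularity pins their intersection down to $\mathrm{cl}(c)$ (for $j = 0$) or to $\mathrm{cl}(r_0)$ (for $j \geq 1$), so that $e \in \mathrm{cl}(\{r_0,\dots,r_m\}-r_j)$ would force $e \in \mathrm{cl}(c)$ or $e \in \mathrm{cl}(r_0)$, contradicting the choice of $e$. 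The bookkeeping that verifies the relevant joins have the expected ranks (so that the submodular bound is sharp) is the delicate part, and it is exactly where the inductive properties of $c$ are consumed.

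Finally, applying the bootstrapping claim to $(X, \mathrm{cl}_{A_0})$, which is $\omega$-homogeneous and carries the federated pair $\{a_0, a_1\}$, yields that $(X, \mathrm{cl}_{A_0})$ is federated, with $A_0 \subseteq_{\omega} X$ as required.
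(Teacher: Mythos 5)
Your proposal is correct and follows essentially the same route as the paper: extract a minimal finite witness of non-triviality, localize at all but two of its points to obtain a federated pair in $(X,\mathrm{cl}_{A_0})$, and then use $\omega$-homogeneity to transport that pair repeatedly so as to chain a witness through an arbitrary finite independent set. Your induction on the size of the independent set, with the exchange-based verification that the transported point avoids every codimension-one subflat, is just an unrolled and more explicitly justified version of the paper's construction of the sequence $d_0^*,\dots,d_{m-1}^*$ satisfying its conditions i)--iv).
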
 
\endgroup
\makeatother 

	\begin{proof} Suppose that $(X, \mathrm{cl})$ is non-trivial, then there is $A \subseteq X$ such that $d \in \mathrm{cl}(A)$ but $d \notin \bigcup_{a \in A} \mathrm{cl}(\left\{ a \right\})$. By Finite Character, there is $A^{**} \subseteq_{\omega} A$, such that
		\[\tag{$\star$} d \in \mathrm{cl}(A^{**}) \text{ but } d \notin \bigcup_{a \in A^{**}} \mathrm{cl}(\left\{ a \right\}). \]
	Let $A^* = \left\{ a_0, ..., a_{n-1} \right\} \subseteq A^{**}$ be of minimal cardinality with respect to property $(\star)$, then we must have that $d \in \mathrm{cl}(\left\{ a_0, ..., a_{n-1} \right\})$, but 
	\[ d \notin \mathrm{cl}(\left\{ a_0, ..., a_{n-3} \right\} \cup \left\{ a_{n-2} \right\}) \text{ and } d \notin \mathrm{cl}(\left\{ a_0, ..., a_{n-3} \right\} \cup \left\{ a_{n-1} \right\}). \]
	Let $A_0 = \left\{ a_0, ..., a_{n-3} \right\}$, we claim that $(X, \mathrm{cl}_{A_0})$ is federated. For ease of notation, for $a, b \in X$ instead of $\mathrm{cl}(\left\{ a, b \right\})$ we just write $\mathrm{cl}(a, b)$, and analogously for singletons. Let $D_0 = \left\{ d_0, ..., d_{m-1} \right\}$ be independent in $(X, \mathrm{cl}_{A_0})$. By induction on $m$ we construct $d^{*}_0, ..., d^{*}_{m-1} \in X$ such that for $i < m -1$:
	\begin{enumerate}[i)]
		\item $d^*_{i} \notin \mathrm{cl}_{A_0}(d_{i+1})$ and $d_{i+1} \notin \mathrm{cl}_{A_0}(d^*_{i})$;
	\end{enumerate}
and, for $1 \leq i < m$:	
	\begin{enumerate}[i)]\setcounter{enumi}{1}
		\item $d^*_{i} \in \mathrm{cl}_{A_0}(d^*_{i-1}, d_i) - (\mathrm{cl}_{A_0}(d^*_{i-1}) \cup \mathrm{cl}_{A_0}(d_i))$;
		\item $d^*_{i-1} \in \mathrm{cl}_{A_0}(d^*_i, d_i) - (\mathrm{cl}_{A_0}(d^*_i) \cup \mathrm{cl}_{A_0}(d_i))$;
		\item $d_i \in \mathrm{cl}_{A_0}(d^*_{i-1}, d^*_i) - (\mathrm{cl}_{A_0}(d^*_{i-1}) \cup \mathrm{cl}_{A_0}(d_i^*))$.
\end{enumerate}
By properties ii) - iv) it will then be clear that $d^{*}_{m-1} \in  \mathrm{cl}(D_0) - \bigcup_{D \subsetneq D_0} \mathrm{cl}(D)$, as wanted. We start the construction. Let $d^{*}_0 = d_0$. Suppose then that we have defined $d^{*}_i$, we want to define $d^{*}_{i+1}$. We notice the following:
	\begin{enumerate}[1)]
		\item $a_{n-2} \notin \mathrm{cl}_{A_0}(\emptyset)$ and $d^*_{i} \notin \mathrm{cl}_{A_0}(\emptyset)$;
		\item $a_{n-1} \notin \mathrm{cl}_{A_0}(a_{n-2})$ and $d_{i+1} \notin \mathrm{cl}_{A_0}(d^*_{i})$;
		\item $d \in \mathrm{cl}_{A_0}(a_{n-1}, a_{n-2}) - (\mathrm{cl}_{A_0}(a_{n-2}) \cup \mathrm{cl}_{A_0}(a_{n-1}))$.
		
\end{enumerate}
Because of $\omega$-homogeneity and 1) we can find $f_1 \in \mathrm{Aut}((X, \mathrm{cl}) / A_0)$ such that $f_1(a_{n-2}) = d^*_{i}$. But then by 2) we have $f_1(a_{n-1}) \notin (\mathrm{cl}_{A_0}(d^*_{i}))$, and so again by $\omega$-homogeneity we can find $f_2 \in \mathrm{Aut}((X, \mathrm{cl}) / A_0 \cup \left\{ d^*_{i} \right\})$ such that
\[ a_{n-2} \xmapsto{f_1} d^*_{i} \xmapsto{f_2} d^*_{i} \]
\[ a_{n-1} \xmapsto{f_1} f_1(a_{n-1}) \xmapsto{f_2} d_{i+1}. \]
Let $d^{*}_{i+1} = f_2(f_1(d))$. We show that $d^{*}_{i+1}$ has properties i) - iv). Property ii) is clear from 3), and properties iii) and iv) follow from ii) by Exchange. Regarding property i), if $i = m-1$ there is nothing to prove. Suppose then that $i < m-1$. If $d_{i+2} \in \mathrm{cl}_{A_0}(d^*_{i+1})$, then $d_{i+2} \in \mathrm{cl}_{A_0}(\left\{ d_j \, | \, j < i+2 \right\})$, because $d^*_{i+1} \in \mathrm{cl}_{A_0}(\left\{ d_j \, | \, j < i+2 \right\})$, hence we contradict the independence of $D_0$. Finally, suppose that $d^*_{i+1} \in \mathrm{cl}_{A_0}(d_{i+2})$. By the already proved ii) for $i+1$ it follows in particular that $d^*_{i+1} \notin \mathrm{cl}_{A_0}(\emptyset)$, hence by Exchange $d_{i+2} \in \mathrm{cl}_{A_0}(d^*_{i+1})$, and so we are in the case just considered, which leads to a contradiction.
		
\end{proof}

\subsection{Federated Sequences in Stable Theories\myfnt{The author would like to thank Tapani Hyttinen for the help in the writing of this section.}}

	We are driven by the following questions.

	\begin{question}\label{question_1} Let $\mathcal{G}$ be an $\omega$-stable (resp. superstable and stable) group. Can we find $\forkindep$-federated sequences in the monster model for $\mathrm{Th}(\mathcal{G})$? Under which conditions is an $\forkindep$-independent sequence a $\forkindep$-federated sequence?
	
\end{question}

	\begin{question}\label{question_2} Are there known classes of theories in which we can always find $\forkindep$-federated sequences? Under which conditions can we find $\forkindep$-federated sequences in the stability-theoretic classes of theories, e.g. classifiable or stable?

\end{question}

		We give a complete answer to Question~\ref{question_1} and a partial answer to Question~\ref{question_2}.

	\begin{proposition} Let $\mathcal{G}$ be a stable group, then in the monster model for $\mathrm{Th}(\mathcal{G})$ we can find a $\forkindep$-federated sequence. In fact, any $\forkindep$-independent sequence of generic elements is federated.

	\end{proposition}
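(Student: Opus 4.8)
The plan is to verify the two defining conditions of federation for a $\forkindep$-independent sequence $(a_i \mid i < \omega)$ of generic elements of the stable group $\mathcal{G}$, working in the monster model $\mathfrak{M}$ of $\mathrm{Th}(\mathcal{G})$. Recall that a generic element of a stable group is one whose (left or right) translates behave as independently as possible, and the key algebraic feature to exploit is that the group operation lets us \emph{produce} a single witness $d$ from a finite independent tuple. For each fixed $n \in \omega^*$ I would consider the product $d := a_0 \cdot a_1 \cdots a_{n-1}$ (or a suitable variant), and argue that $d$ is the federation witness for the sub-sequence $(a_i \mid i < n)$: it is dependent on the whole tuple $a_0 \cdots a_{n-1}$, yet independent from every proper sub-tuple obtained by deleting one coordinate.

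The two things to check are as follows. First, $d \not\!\pureindep[\emptyset] a_0 \cdots a_{n-1}$: since $d$ lies in the definable closure (indeed $\mathrm{dcl}$) of $a_0 \cdots a_{n-1}$ and is itself generic (a product of independent generics is generic), we have $d \pureindep[\emptyset] d$ failing, and more directly $d$ cannot be forking-independent from a set that computes it unless $d$ is algebraic over $\emptyset$, which a generic element is not. Second, for each $j < n$ I must show $d \pureindep[\emptyset] (a_i \mid i<n) - a_j$. Here the generic-ity of $a_j$ over the remaining parameters is the crux: fixing all $a_i$ with $i \neq j$, the element $a_j$ is generic over them, hence the product $d$, which as a function of $a_j$ (with the others as parameters) is a translate of $a_j$, remains generic and therefore forking-independent from those parameters. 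This is the standard fact that a generic translate of a generic is generic and independent from the translating data.

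The main obstacle I expect is the second condition, and specifically making precise the claim that $d = a_0 \cdots a_{n-1}$ is \emph{independent} from the deletion tuple over $\emptyset$. The subtlety is that $d$ encodes all of $a_0, \dots, a_{n-1}$, so one cannot naively conclude independence from a sub-tuple; the argument must genuinely use that, after fixing $(a_i \mid i \neq j)$, the coordinate $a_j$ is forking-independent from them (which follows from the independence of the sequence together with Lemma~\ref{lemma_indep_seq}) and that multiplication by the fixed invertible ``constant'' $\prod_{i<j} a_i$ on the left and $\prod_{i>j} a_i$ on the right is a definable bijection preserving genericity. I would phrase this via the behaviour of forking under definable bijections together with the characterization of generics in stable groups (e.g. $g$ generic over $B$ iff $hg$ generic over $Bh$ for $h$ independent from $g$ over $B$).

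Concretely, the steps in order are: (i) fix $n$ and set $d := \prod_{i<n} a_i$; (ii) observe $d$ is generic, using that independent generics multiply to a generic, hence $d$ is non-algebraic and the dependence $d \not\!\pureindep[\emptyset] a_0\cdots a_{n-1}$ holds; (iii) fix $j<n$, write $B_j := (a_i \mid i<n)-a_j$, and note $a_j \pureindep[\emptyset] B_j$ by Lemma~\ref{lemma_indep_seq}; (iv) apply the genericity-under-translation principle to conclude that $d$, being a two-sided translate of $a_j$ by elements of $\mathrm{dcl}(B_j)$, is generic over $B_j$ and hence $d \pureindep[\emptyset] B_j$; (v) since this holds for every $j<n$ and every $n \in \omega^*$, the sequence is $\forkindep$-federated, so $\mathrm{IF}(\forkindep; \emptyset) = \omega$ and $\forkindep$ is federated.
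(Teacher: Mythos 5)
Your proposal is correct and is essentially the paper's own argument: the paper simply takes $d = \sum_{i<n} a_i$ (the product of the first $n$ terms of an independent sequence of generics) and asserts the two federation conditions, which you justify via the standard facts that a translate of a generic by parameters in the base is generic (hence non-forking over $\emptyset$) and that a non-algebraic element cannot be forking-independent from a tuple in whose definable closure it lies. The only difference is that you supply the details the paper leaves implicit.
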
 

	\begin{proof} Let $(a_i \, | \, i \in \omega)$ be such that $a_i$ realizes a generic type over $(a_j \, | \, j < i)$, for every $i < \omega$. Then for every $n \in \omega^*$ we have that
		\[ \sum_{i < n} a_i \not\!\displayforkindep[\emptyset] a_0 \cdots a_{n-1} \;\; \text{ and } \;\; \sum_{i < n} a_i \, \displayforkindep[\emptyset] ( a_i \, | \, i < n) - a_j  \, \text{ for every $j < n$}. \] 

\end{proof}

	\begin{theorem}\label{regular_types_and_pregeo} Let $T$ be a stable theory, $A \subseteq \mathfrak{M}$, $p \in \mathrm{S}_n(A)$ a regular type and $X \subseteq \mathfrak{M}^{n}$ the set of realizations of $p$ in $\mathfrak{M}$. Then on $X$ the forking dependence relation determines an infinite dimensional $\omega$-homogenous pregeometry $(X, \mathrm{cl}^{\mathrm{f}})$. 
		
\end{theorem}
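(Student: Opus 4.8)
The plan is to exhibit $(X,\mathrm{cl}^{\mathrm{f}})$ as the pregeometry induced by forking dependence, and to concentrate the whole weight of the argument in the idempotence of the closure, since that is the only place where regularity is genuinely needed. Throughout I work over $A$, and I use that a regular type is stationary (replacing $A$ by $\mathrm{acl}^{\mathrm{eq}}(A)$ if necessary, which alters nothing relevant), so that $p$ has a unique nonforking extension $p|B$ to every $B \supseteq A$. I would define, for $Y \subseteq X$,
\[ \mathrm{cl}^{\mathrm{f}}(Y) = \{\, a \in X : a \not\!\forkindep[A] Y \,\}. \]

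First I would dispatch the routine pregeometry axioms. Reflexivity $Y \subseteq \mathrm{cl}^{\mathrm{f}}(Y)$ holds because a realization $a \in Y$ of the non-algebraic type $p$ satisfies $a \in \mathrm{acl}(AY) \setminus \mathrm{acl}(A)$, hence $a \not\!\forkindep[A] Y$; monotonicity and finite character are immediate from the Monotonicity and Finite Character axioms for $\forkindep$. For exchange, suppose $a \in \mathrm{cl}^{\mathrm{f}}(Yb) \setminus \mathrm{cl}^{\mathrm{f}}(Y)$, that is $a \forkindep[A] Y$ but $a \not\!\forkindep[A] Yb$. By the Transitivity proposition these two force $a \not\!\forkindep[AY] b$, whence $b \not\!\forkindep[AY] a$ by Symmetry, and then Base Monotonicity read contrapositively (lowering the base from $AY$ to $A$) yields $b \not\!\forkindep[A] Ya$, i.e. $b \in \mathrm{cl}^{\mathrm{f}}(Ya)$. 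Note that this step invokes only the abstract independence axioms, not regularity.

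The heart of the proof is idempotence, equivalently the assertion that $a \forkindep[A] Y$ implies $a \forkindep[A] \mathrm{cl}^{\mathrm{f}}(Y)$. By Finite Character it suffices to prove, for $c_1, \dots, c_k \in \mathrm{cl}^{\mathrm{f}}(Y)$, that $a \forkindep[A] Y c_1 \cdots c_k$, which I would establish by induction, adding one $c_i$ at a time. For the inductive step set $B = AY c_1 \cdots c_{i-1}$ and assume $a \forkindep[A] B$. Then $\mathrm{tp}(a/B) = p|B$ by stationarity, while $c_i \not\!\forkindep[A] Y$ gives $c_i \not\!\forkindep[A] B$ (by Monotonicity and Symmetry), so $\mathrm{tp}(c_i/B)$ is a forking extension of $p$. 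Here regularity enters in its defining form — $p$ is orthogonal to each of its forking extensions — to conclude $a \forkindep[B] c_i$; Transitivity of forking then upgrades $a \forkindep[A] B$ and $a \forkindep[B] c_i$ to $a \forkindep[A] B c_i$, closing the induction. This is the step I expect to be the main obstacle: it is the unique point at which regularity is indispensable, and it is precisely what makes the forking closure genuinely idempotent, hence a matroid, rather than a mere dependence relation.

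Finally, infinite dimensionality follows because in the monster model $p$ admits an infinite Morley sequence over $A$, which is $A$-independent and therefore $\mathrm{cl}^{\mathrm{f}}$-independent, so $\dim(X) \geq \aleph_0$. For $\omega$-homogeneity, given finite $Y \subseteq X$ and $a,b \in X \setminus \mathrm{cl}^{\mathrm{f}}(Y)$, both $a$ and $b$ realize the nonforking extension $p|AY$, which is unique by stationarity; hence $\mathrm{tp}(a/AY) = \mathrm{tp}(b/AY)$ and some $g \in \mathrm{Aut}(\mathfrak{M}/AY)$ carries $a$ to $b$. Since $g$ fixes $A$ it preserves $p$, the set $X$, and forking, so it restricts to an automorphism of $(X,\mathrm{cl}^{\mathrm{f}})$ fixing $Y$ pointwise and sending $a$ to $b$, which is exactly the required $\omega$-homogeneity.
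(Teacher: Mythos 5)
Your proposal is correct, and it is essentially the canonical argument: the paper itself gives no proof of this theorem, deferring to Baldwin's \emph{Fundamentals of Stability Theory}, and what you have written is precisely the standard proof found there. You correctly isolate the one non-formal step — transitivity of the dependence relation (idempotence of $\mathrm{cl}^{\mathrm{f}}$), where regularity is applied in the form ``$\mathrm{tp}(a/B)=p|B$ is orthogonal to the forking extension $\mathrm{tp}(c_i/B)$ of $p$, hence $a \forkindep[B] c_i$'' — while everything else (reflexivity from non-algebraicity of $p$, exchange, finite character, infinite dimensionality via a Morley sequence, $\omega$-homogeneity via stationarity of $p|AY$) follows from the abstract axioms for forking together with stationarity, exactly as in the reference. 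The only point worth flagging is the hypothesis bookkeeping you already acknowledge: stationarity of $p$ is either built into the definition of regular type or obtained by passing to $\mathrm{acl}^{\mathrm{eq}}(A)$, and the theorem is tacitly read with that convention.
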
 

	\begin{proof} See for example \cite{baldwin_fun_sta_th}.
		
\end{proof}

	From Theorems~\ref{non-triviality_implies_algebraicity} and \ref{regular_types_and_pregeo} it follows directly the following corollary, which ensures that if the theory admits non-trivial regular types then we can always find federated sequences (over some set of parameters).

	\begin{corollary} Let $T$ be a stable theory, $A \subseteq \mathfrak{M}$, $p \in \mathrm{S}_1(A)$ a regular type and $X \subseteq \mathfrak{M}$ the set of realizations of $p$ in $\mathfrak{M}$. If $(X, \mathrm{cl}^{\mathrm{f}})$ is non-trivial, then we can find $A_0 \subseteq_{\omega} X$ and $(a_i \, | \, i \in \omega) \in X^{\omega}$ such that $(a_i \, | \, i \in \omega)$ is a $\forkindep$-federated sequence over $A \cup A_0$.
	
\end{corollary}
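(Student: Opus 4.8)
The plan is to assemble the corollary from the two cited theorems together with the Example on infinite dimensional federated pregeometries. First I would apply Theorem~\ref{regular_types_and_pregeo} to the regular type $p \in \mathrm{S}_1(A)$: this yields that the forking dependence relation on the set $X$ of realizations of $p$ determines an infinite dimensional $\omega$-homogeneous pregeometry $(X, \mathrm{cl}^{\mathrm{f}})$. The hypothesis of the corollary is precisely that this pregeometry is non-trivial, so I am in a position to invoke Hyttinen's theorem.

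Next I would apply Theorem~\ref{non-triviality_implies_algebraicity} to $(X, \mathrm{cl}^{\mathrm{f}})$. Since it is $\omega$-homogeneous and non-trivial, the theorem produces a finite set $A_0 \subseteq_{\omega} X$ such that the localization $(X, \mathrm{cl}^{\mathrm{f}}_{A_0})$ is federated. Because localizing at a finite set leaves an infinite dimensional pregeometry infinite dimensional, $(X, \mathrm{cl}^{\mathrm{f}}_{A_0})$ is again an infinite dimensional federated pregeometry. I would then invoke the Example: in an infinite dimensional federated pregeometry any injective enumeration $(a_i \mid i \in \omega)$ of a basis of an $\aleph_0$-dimensional subset is a federated sequence with respect to the pre-independence relation $\clindep$ determined by the closure operator, here $\mathrm{cl}^{\mathrm{f}}_{A_0}$. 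This gives a sequence $(a_i \mid i \in \omega) \in X^{\omega}$ that is $\clindep[A_0]$-federated in $(X, \mathrm{cl}^{\mathrm{f}}_{A_0})$.

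The one point requiring care---and the step I expect to be the main obstacle---is the translation between the two notions of federation. On tuples from $X$ the pre-independence relation $\clindep$ determined by $\mathrm{cl}^{\mathrm{f}}$ agrees with forking independence $\forkindep$ over the base $A$, since $\mathrm{cl}^{\mathrm{f}}$ is by construction the forking-closure operator relative to $A$ and, on realizations of the regular type $p$, the pregeometric dimension is exactly the forking-independent dimension underlying Theorem~\ref{regular_types_and_pregeo}. Localizing the closure at $A_0$ corresponds exactly to passing to forking over $A \cup A_0$, so $\clindep[A_0]$ on $(X, \mathrm{cl}^{\mathrm{f}}_{A_0})$ coincides with $\forkindep$ over $A \cup A_0$. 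In particular the dependent point witnessing federation for each initial segment lies in $X$ (it is produced in the proof of Theorem~\ref{non-triviality_implies_algebraicity} as the image of a point of $X$ under an automorphism fixing $A_0$) and hence witnesses the required forking dependence, while the stated independences over $A_0$ become independences over $A \cup A_0$.

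Putting these observations together, the sequence $(a_i \mid i \in \omega)$ obtained from the Example is $\forkindep$-federated over $A \cup A_0$, which is exactly the assertion of the corollary. The entire argument is thus a direct chaining of Theorem~\ref{regular_types_and_pregeo} (to produce the $\omega$-homogeneous infinite dimensional pregeometry), Theorem~\ref{non-triviality_implies_algebraicity} (to localize to a federated pregeometry), and the Example (to extract a federated sequence of length $\omega$), with the only genuine verification being the identification of $\clindep[A_0]$ with $\forkindep$ over $A \cup A_0$ on $X$.
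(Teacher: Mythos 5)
Your proposal is correct and follows essentially the same route as the paper: localize via Theorem~\ref{non-triviality_implies_algebraicity} to get a federated (still infinite dimensional) pregeometry $(X,\mathrm{cl}^{\mathrm{f}}_{A_0})$, enumerate a basis, and read off a federated sequence as in the Example. The only difference is that you make explicit the identification of $\clindep[A_0]$ on $X$ with $\forkindep$ over $A\cup A_0$, which the paper's proof leaves tacit; this is a reasonable point to spell out but not a departure in method.
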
 

	\begin{proof} By Theorem~\ref{non-triviality_implies_algebraicity} there is $A_0 \subseteq_{\omega} X$ such that $(X, \mathrm{cl}_{A_0})$ is federated. Notice that the pregeometry $(X, \mathrm{cl}_{A_0})$ is also infinite dimensional. Let $(a_i \, | \, i \in \omega)$ be an enumeration of the fist $\omega$ elements in a basis $B$ for $(X, \mathrm{cl}_{A_0})$. Then $(a_i \, | \, i \in \omega)$ is a $\forkindep$-federated sequence over $A \cup A_0$.
	
\end{proof}

\section{Independence Logic}\label{indep_logic}

	We now enter in the dependence logic component of the paper. In the first section we describe how the independence atom is characterized in {\em team semantics} and study its axiomatization. Team semantics is a new semantic tool introduced in \cite{hodges} and then developed in \cite{vaananen}, which is based on the idea of giving semantics to logic languages by means of sets of assignments instead of single assignments. In the second section we interpret the independence atom as $\pureindep$ and study the implication problem for the resulting system. Notice that all the logic systems described in this paper have an atomic language, with no connectives and no quantifiers. It may also be worth noticing that the atomic systems described in Section~\ref{atomic_indep_logic} are part of a wider logic language with actual logical operations $\wedge, \vee, \neg, \exists, \forall$. For details see \cite{vaananen}. In the case of the systems described in Section~\ref{abstr_indep_rel_indep_logic}, is it not yet clear what would be the right way to extend the atomic system to a logic with connectives and quantifiers. Further investigations will probably answer this question. 
	
	In the case of the systems based on team semantics we give a semantics based on first-order structures, i.e. the usual structures with respect to which first-order logic is defined. In the present treatment of the subject we do not consider (non-logical) predicates and terms, and so it may seem weird that we give the semantics with respect to structures instead of mere sets. As a matter of facts, we could have used structureless sets instead of structures. We use structures to stress that these systems are fragments of a broader language, where predicates and terms play a natural role. In the present paper the focus is on the independence phenomenon, which manifests itself at the atomic level, and it is independent from the fact that we allow terms to occur in the independence atoms. 
	
	Another point worth making explicit is about the systems described in Section~\ref{abstr_indep_rel_indep_logic}. These systems are defined with respect to a fixed $\mathrm{AEC}$ and a pre-independence relation $\pureindep$ on its monster model. The dependence on the particular pre-independence relation considered is crucial. This is made clear by the content of Theorems~\ref{completeness_indep} and \ref{characterization}. While the unconditional system is sound independently of the choice of $\pureindep$, we have a completeness result with respect to the deductive system described in Section~\ref{atomic_indep_logic} if and only if the pre-independence relation under investigation fulfills some specific requirements, namely federation and admissibility of an algebraic point.
	
	\subsection{Atomic Independence Logic}\label{atomic_indep_logic}

	 \index{Atomic!Independende Logic} {\em Atomic Independendce Logic} ($\mathrm{AIndL}$) is defined as follows.
	%
	%
		The language of this logic is made only of independence atoms. That is, let $\vec{x}$ and $\vec{y}$ be finite sequences of variables, then the formula $\vec{x} \boto \vec{y}$ is a formula of the language of $\mathrm{AIndL}$.
	%
	%
		The intuitive meaning of the atom $\vec{x} \boto \vec{y}$ in team semantics is that the values of the variables in $\vec{x}$ give no information about the values of the variables in $\vec{y}$ and vice versa. The semantics is defined as in \cite{GV12}.
		We denote by $\mathrm{Var}$ the set of first-order variables. Let $\mathcal{M}$ be a first order structure. Let $X = \left\{ s_i \right\}_{i\in I}$ with $s_i: \mathrm{dom}(X) \rightarrow M$ and $\vec{x}  \vec{y} \subseteq \mathrm{dom}(X) \subseteq \mathrm{Var}$. We say that $\mathcal{M}$ satisfies $\vec{x} \boto \vec{y}$ under $X$, in symbols $\mathcal{M} \models_X \vec{x} \boto \vec{y}$, if
		\[ \forall s, s' \in X \;  \exists s'' \in X \; (s''(\vec{x}) =s(\vec{x}) \wedge s''(\vec{y}) = s'(\vec{y})). \]
	%
	%
	Let $\Sigma$ be a set of atoms and let $X$ be such that the set of variables occurring in $\Sigma$ is included in $\mathrm{dom}(X)$. We say that $\mathcal{M}$ satisfies $\Sigma$ under $X$, in symbols $\mathcal{M} \models_X \Sigma$, if $\mathcal{M}$ satisfies every atom in $\Sigma$ under $X$. 
	%
	We say that $\vec{x} \boto \vec{y}$ is a logical consequence of $\Sigma$, in symbols $\Sigma \models \; \vec{x} \boto \vec{y}$, if for every $\mathcal{M}$ and $X$ such that the set of variables occurring in $\Sigma \cup \left\{ \vec{x} \boto \vec{y} \right\}$ is included in $\mathrm{dom}(X)$ we have that
			\[ \text{ if } \; \mathcal{M} \models_X \Sigma \; \text{ then } \; \mathcal{M} \models_X \vec{x} \boto \vec{y}. \]
	%

		The deductive system of $\mathrm{AIndL}$ consists of the following rules:
	\begin{enumerate}[($a_3.$)]
		\item $\vec{x} \boto \emptyset$;
		\item If $\vec{x} \boto \vec{y}$, then $\vec{y} \boto \vec{x}$;
		\item If $\vec{x} \boto \vec{y}\vec{z}$, then $\vec{x} \boto \vec{y}$;
		\item If $\vec{x} \boto \vec{y}$ and $\vec{x}\vec{y} \boto \vec{z}$, then $\vec{x} \boto \vec{y}\vec{z}$;
		\item If $x \boto x$, then $x \boto \vec{y}$ [for arbitrary $\vec{y}$];
		\item If $\vec{x} \boto \vec{y}$, then $\pi\vec{x} \boto \sigma\vec{y}$ [where $\pi$ and $\sigma$ are permutations of $\vec{x}$ and $\vec{y}$ respectively];
		\item If $\vec{x} y \vec{z} \boto \vec{w}$, then $\vec{x} y y \vec{z} \boto \vec{w}$.

	\end{enumerate}
	A deduction from a set of atoms $\Sigma$ is a sequence of atoms $(\phi_0 , ... , \phi_{n-1})$ such that each $\phi_i$ is either an element of $\Sigma$, an instance of axiom ($a{_3.}$), or follows from one or more formulas of $\Sigma \cup \left\{ \phi_0, ... , \phi_{i-1} \right\}$ by one of the rules presented above. We say that $\phi$ is provable from $\Sigma$, in symbols $\Sigma \vdash \phi$, if there is a deduction $(\phi_0 , ... , \phi_{n-1})$ from $\Sigma$ with $\phi = \phi_{n-1}$.

		\begin{theorem}[\cite{geiger1991axioms} and \cite{galliani_and_vaananen}] Let $\Sigma$ be a set of atoms, then 
	\[\Sigma \models \vec{x} \boto \vec{y} \text{ if and only if }  \Sigma \vdash \vec{x} \boto \vec{y}. \]

	\end{theorem}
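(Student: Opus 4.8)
The plan is to prove the two implications separately: soundness, $\Sigma \vdash \vec{x} \boto \vec{y} \Rightarrow \Sigma \models \vec{x} \boto \vec{y}$, by induction on the length of a deduction, and completeness, $\Sigma \models \vec{x} \boto \vec{y} \Rightarrow \Sigma \vdash \vec{x} \boto \vec{y}$, by contraposition through a countermodel construction. For soundness it suffices to check that each of the seven rules preserves satisfaction in an arbitrary pair $(\mathcal{M}, X)$. Rules $(a_3)$, $(b_3)$, $(c_3)$, $(f_3)$ and $(g_3)$ are immediate from the definition of $\models_X$: the empty second component imposes no matching constraint, the defining condition is visibly symmetric, a witness matching a longer tuple restricts to a witness matching any subtuple, and permuting or duplicating a variable alters neither the team nor the matching condition. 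The two rules that require an argument are $(d_3)$ and $(e_3)$. For $(d_3)$, given $s,s' \in X$ I would first apply $\vec{x} \boto \vec{y}$ to the pair $(s,s')$ to obtain $t \in X$ with $t(\vec{x}) = s(\vec{x})$ and $t(\vec{y}) = s'(\vec{y})$, then apply $\vec{x}\vec{y} \boto \vec{z}$ to the pair $(t,s')$ to obtain $w \in X$ with $w(\vec{x}\vec{y}) = t(\vec{x}\vec{y})$ and $w(\vec{z}) = s'(\vec{z})$; this $w$ witnesses $\vec{x} \boto \vec{y}\vec{z}$ for $(s,s')$. For $(e_3)$ I would observe that $\mathcal{M} \models_X x \boto x$ forces $s(x) = s'(x)$ for all $s,s' \in X$, since the witness $s''$ must satisfy both $s''(x)=s(x)$ and $s''(x)=s'(x)$; thus $x$ is constant on $X$, and a constant coordinate is trivially independent of any $\vec{y}$.

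For completeness I would argue contrapositively. Assuming $\Sigma \not\vdash \vec{x} \boto \vec{y}$, I seek $\mathcal{M}$ and $X$ with $\mathcal{M} \models_X \Sigma$ but $\mathcal{M} \not\models_X \vec{x} \boto \vec{y}$; since each $\sigma \in \Sigma$ has the one-line deduction $(\sigma)$, it is enough to produce a single team that validates $\Sigma$ yet falsifies the target. First I would pass to a normal form using $(f_3)$, $(g_3)$ and $(e_3)$: permutation- and duplication-invariance let me treat every atom as a statement about \emph{sets} of distinct variables, and the observation in the soundness step shows that any variable occurring on both sides of a valid atom must be provably constant, so I can split off the constant variables and reduce to the case in which the two tuples of the target are disjoint. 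Over the finite set $V$ of variables occurring in $\Sigma \cup \{\vec{x}\boto\vec{y}\}$, an atom $\vec{u} \boto \vec{v}$ with $\vec{u}\cap\vec{v}=\emptyset$ holds in a team $X$ exactly when $X \restriction \vec{u}\vec{v} = (X\restriction \vec{u}) \times (X \restriction \vec{v})$, so the problem becomes the purely combinatorial one of realizing precisely the product decompositions imposed by the deductive closure $\Sigma^{+} = \{\phi : \Sigma \vdash \phi\}$.

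The countermodel itself I would obtain by a product construction: decompose $V$ into the blocks determined by the independence structure generated by $\Sigma^{+}$, take $M$ large enough, and let $X$ be the set of assignments varying freely in Cartesian-product fashion across blocks while respecting the within-block constraints. This is the relational analogue of the Geiger--Paz--Pearl probability distribution, adapted to team semantics, and is exactly the content carried out in \cite{geiger1991axioms} and \cite{galliani_and_vaananen}. The hard part will be the correctness of this step: one must verify that the team validates every atom of $\Sigma$ and, crucially, introduces \emph{no} spurious independence, so that in particular $\vec{x} \boto \vec{y}$ fails. This is precisely where the full strength of the seven rules is consumed, since it amounts to showing that they already generate every product decomposition that any team satisfying $\Sigma$ is forced to exhibit — equivalently, that the closure operator read off syntactically from the deductive system coincides with the semantic one.
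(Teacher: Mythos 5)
Your soundness half is correct: the pair-chasing argument for rule ($d_3.$) and the observation that $x \boto x$ forces $x$ to be constant on the team are exactly the right verifications, and the remaining rules are indeed immediate. The preprocessing you set up for completeness --- passing to a minimal underivable atom, splitting off the provably constant variables via ($c_3.$) and ($e_3.$), and reducing to a target with disjoint tuples --- is also the right skeleton. (The paper gives no proof of this theorem, citing \cite{geiger1991axioms} and \cite{galliani_and_vaananen}; the closest internal comparison is the proof of Theorem~\ref{completeness_indep}, which follows this same skeleton in the abstract setting.)

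The gap is in the countermodel itself, at precisely the step you defer. A team that ``varies freely in Cartesian-product fashion across blocks'' of a partition of $V$ cannot in general realize the derivable atoms without introducing spurious ones, because the closure $\Sigma^{+}$ is not determined by any partition into mutually independent blocks. Take $\Sigma = \{x \boto y,\; y \boto z,\; x \boto z\}$ and target $x \boto yz$, which is neither derivable nor a semantic consequence. If $x,y,z$ lie in three distinct blocks, the product forces $x \boto yz$ (spurious); if two share a block, the product across blocks again reduces $x \boto yz$ to an atom of $\Sigma$; and if all three share one block, the ``within-block constraints'' are the entire problem and nothing has been gained. What is actually needed is the parity team: with $\vec{x} \boto \vec{y}$ minimal and $k$ the number of its variables, let $X$ be the set of all assignments of $\{0,1\}$ to those $k$ variables whose sum is even, all other variables constant. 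For disjoint $U, V$ contained in the variables of the target, one checks that $\mathcal{M} \models_X U \boto V$ if and only if $U \cup V$ omits at least one of the $k$ variables (a free coordinate absorbs the parity), so the target fails; and an atom of $\Sigma$ whose two sides jointly cover all $k$ variables would, by minimality and rules ($b_3.$)--($e_3.$), yield a derivation of $\vec{x} \boto \vec{y}$ --- this case analysis, not a product decomposition, is where the strength of the rules is consumed. Note that the parity team is exactly a team in which the target's variables form what the paper calls a federated sequence ($(k-1)$-wise but not $k$-wise independent, with one variable playing the role of the sum of the others), and note also that your stated goal of a single team realizing exactly $\Sigma^{+}$ is stronger than needed: one countermodel per underivable atom suffices.
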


		\index{Atomic!Conditional Independence Logic} {\em Atomic Conditional Independence Logic} ($\mathrm{ACIndL}$) is defined as follows.
	%
	%
		The language of this logic is made only of conditional independence atoms. That is, let $\vec{x}$, $\vec{y}$  and $\vec{z}$ be finite sequences of variables, then the formula $\vec{x} \botc{\vec{z}} \vec{y}$ is a formula of the language of $\mathrm{ACIndL}$.
	%
	%
	The semantics is defined as in \cite{GV12}. 
		Let $\mathcal{M}$ be a first order structure. Let $X = \left\{ s_i \right\}_{i\in I}$ with $s_i: \mathrm{dom}(X) \rightarrow M$ and $\vec{x}  \vec{y}  \vec{z} \subseteq \mathrm{dom}(X) \subseteq \mathrm{Var}$. We say that $\mathcal{M}$ satisfies $\vec{x} \botc{\vec{z}} \vec{y}$ under $X$, in symbols $\mathcal{M} \models_X \vec{x} \botc{\vec{z}} \vec{y}$, if
			\[ \forall s, s' \in X (s(\vec{z}) = s'(\vec{z}) \rightarrow \exists s'' \in X (s''(\vec{z}) =s(\vec{z}) \wedge  s''(\vec{x}) = s(\vec{x})  \wedge  s''(\vec{y}) = s'(\vec{y}))). \]
	%
	%
	%
		Let $\Sigma$ be a set of atoms and let $X$ be such that the set of variables occurring in $\Sigma$ is included in $\mathrm{dom}(X)$. We say that $\mathcal{M}$ satisfies $\Sigma$ under $X$, in symbols $\mathcal{M} \models_X \Sigma$, if $\mathcal{M}$ satisfies every atom in $\Sigma$ under $X$. 
	%
	%
		We say that $\vec{x} \botc{\vec{z}} \vec{y}$ is a logical consequence of $\Sigma$, in symbols $\Sigma \models \; \vec{x} \botc{\vec{z}} \vec{y}$, if for every $\mathcal{M}$ and $X$ such that the set of variables occurring in $\Sigma \cup \left\{ \vec{x} \botc{\vec{z}} \vec{y} \right\}$ is included in $\mathrm{dom}(X)$ we have that
			\[ \text{ if } \; \mathcal{M} \models_X \Sigma \; \text{ then } \; \mathcal{M} \models_X \vec{x} \botc{\vec{z}} \vec{y}. \]
	%
	%

		The deductive system of $\mathrm{ACIndL}$ consists of the following rules:
	\begin{enumerate}[($a_5.$)]
		\item $\vec{x} \botc{\vec{x}} \vec{y}$;
		\item If $\vec{x} \botc{\vec{z}} \vec{y}$, then $\vec{y} \botc{\vec{z}} \vec{x}$;
		\item If $\vec{x}  \vec{x}'  \botc{\vec{z}} \vec{y}  \vec{y}'$, then $\vec{x} \botc{\vec{z}} \vec{y}$;
		\item If $\vec{x} \botc{\vec{z}} \vec{y}$, then $\vec{x}  \vec{z}  \botc{\vec{z}} \vec{y}  \vec{z}$;
		\item If $\vec{x} \botc{\vec{z}} \vec{y}$ and $\vec{u} \botc{\vec{z}, \vec{x}} \vec{y}$, then $\vec{u} \botc{\vec{z}} \vec{y}$;
		\item If $\vec{y} \botc{\vec{z}} \vec{y}$ and $\vec{z}  \vec{x} \botc{\vec{y}} \vec{u}$, then $\vec{x} \botc{\vec{z}} \vec{u}$;
		\item If $\vec{x} \botc{\vec{z}} \vec{y}$ and $\vec{x}  \vec{y} \botc{\vec{z}} \vec{u}$, then $\vec{x} \botc{\vec{z}} \vec{y}  \vec{u}$;
		\item If $\vec{x} \botc{\vec{z}} \vec{y}$, then $\pi\vec{x} \botc{\tau\vec{z}} \sigma\vec{y}$ [where $\pi$, $\tau$ and $\sigma$ are permutations of $\vec{x}$, $\vec{z}$, and $\vec{y}$ respectively].

	\end{enumerate}
	The notions of deduction and provability are defined in analogy with $\mathrm{AIndL}$.


		\begin{theorem} Let $\Sigma$ be a set of atoms, then 
		\[ \Sigma \vdash \; \vec{x} \botc{\vec{z}} \vec{y} \;\;\; \Rightarrow  \;\;\; \Sigma \models \; \vec{x} \botc{\vec{z}} \vec{y}.  \] \qed

	\end{theorem}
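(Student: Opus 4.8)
The plan is to prove soundness by induction on the length of a deduction of $\vec{x} \botc{\vec{z}} \vec{y}$ from $\Sigma$, which reduces the problem to checking that each rule of the deductive system of $\mathrm{ACIndL}$ preserves satisfaction in team semantics. Fix a structure $\mathcal{M}$ and a team $X$ whose domain contains every variable occurring in $\Sigma \cup \{\vec{x} \botc{\vec{z}} \vec{y}\}$, and assume $\mathcal{M} \models_X \Sigma$. Given a deduction $(\phi_0, \ldots, \phi_{n-1})$ with $\phi_{n-1} = \vec{x} \botc{\vec{z}} \vec{y}$, I would show by induction that $\mathcal{M} \models_X \phi_i$ for every $i$. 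The base cases are immediate: each atom of $\Sigma$ is satisfied under $X$ by hypothesis, while every instance of $(a_5.)$ is logically valid, since if $s, s' \in X$ satisfy $s(\vec{x}) = s'(\vec{x})$ then the choice $s'' = s'$ already witnesses the existential clause, giving $s''(\vec{x}) = s(\vec{x})$ and $s''(\vec{y}) = s'(\vec{y})$.

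For the inductive step I would treat each rule separately, in every case constructing the required witness $s''$ out of the witnesses supplied by the premises. The structural rules are routine. Symmetry $(b_5.)$ is immediate from the interchangeable roles of $\vec{x}$ and $\vec{y}$ in the semantic clause; projection $(c_5.)$ holds because a witness for the longer tuples $\vec{x}\vec{x}', \vec{y}\vec{y}'$ is \emph{a fortiori} a witness for $\vec{x}, \vec{y}$; the constancy rule $(d_5.)$ uses that any witness $s''$ automatically satisfies $s''(\vec{z}) = s(\vec{z}) = s'(\vec{z})$, so it also matches $s$ and $s'$ on the adjoined copies of $\vec{z}$; and permutation $(h_5.)$ is trivial, since the clause depends only on the underlying sets of variables.

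The substantive content lies in the composition rules $(e_5.)$, $(f_5.)$ and $(g_5.)$, and I expect the main obstacle there. These are the team-semantic analogues of Transitivity and Exchange, and in each case the witness is produced by chaining together two applications of the existential clause, one from each hypothesis. For $(g_5.)$, for instance, one takes $s, s'$ agreeing on $\vec{z}$; the first premise $\vec{x} \botc{\vec{z}} \vec{y}$ yields an intermediate assignment agreeing with $s$ on $\vec{x}$ and with $s'$ on $\vec{y}$, and a second application of $\vec{x}\vec{y} \botc{\vec{z}} \vec{u}$ to that intermediate assignment and $s'$ produces the final witness, which agrees with $s$ on $\vec{x}$ and with $s'$ on $\vec{y}\vec{u}$. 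The delicate part is the bookkeeping: one must track precisely on which blocks of variables each successive witness is required to agree, so that the conditioning tuple ($\vec{z}$ in $(g_5.)$, $\vec{z},\vec{x}$ in $(e_5.)$, $\vec{y}$ in $(f_5.)$) is respected at every stage of the chaining, and so that the auxiliary premises — in particular the hypothesis $\vec{y} \botc{\vec{z}} \vec{y}$ of $(f_5.)$, which forces $\vec{y}$ to be constant on $\vec{z}$-classes and is what licenses the shift of conditioning from $\vec{z}$ to $\vec{y}$ — are invoked at exactly the right moment. Once these verifications are carried out for the three composition rules, soundness follows.
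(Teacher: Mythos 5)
Your proposal is correct: the paper states this theorem without proof (the terminal \qed marks it as routine), and your induction on the length of the deduction — with the witness-chaining argument for the composition rules $(e_5.)$, $(g_5.)$, the observation that the premise $\vec{y} \botc{\vec{z}} \vec{y}$ forces $\vec{y}$ to be a function of $\vec{z}$ on the team in $(f_5.)$, and the routine checks for the structural rules — is exactly the standard verification the paper implicitly invokes. Nothing is missing beyond the bookkeeping you already flag.
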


		Parker and Parsaye-Ghomi \cite{Parker:1980:IIE:582250.582259} proved that it is not possible to find a \emph{finite} complete axiomatization for the conditional independence atoms. Furthermore, in \cite{herrmann_undec} and \cite{herrmann_undec_corrig} Hermann proved that the consequence relation between these atoms is undecidable. It is, a priori, obvious that there is {\em some} recursively enumerable axiomatization for the conditional independence atoms, because we can reduce the whole question to first order logic with extra predicates and then appeal to the Completeness Theorem of first order logic. In \cite{naumov} Naumov and Nicholls developed an explicit recursively enumerable axiomatization of them.

	\subsection{Abstract Independence Relation Atomic Independence Logic}\label{abstr_indep_rel_indep_logic}

	The system {\em Abstract Independence Relation Atomic Independence Logic} \newline ($\mathrm{AIRAIndL}(\pureindep)$) is defined as follows.
	The syntax and deductive system of this logic are the same as those of $\mathrm{AIndL}$.
	%
	%
	Let $(\mathbf{K}, \preccurlyeq)$ be an $\mathrm{AEC}$ with $\mathrm{AP}$, $\mathrm{JEP}$ and $\mathrm{ALM}$, and $\pureindep$ a pre-independence relation between (bounded) subsets of the monster model.
	%
	Let $s: \mathrm{dom}(s) \rightarrow \mathfrak{M}$ with $\vec{x}  \vec{y} \subseteq \mathrm{dom}(s) \subseteq \mathrm{Var}$. We say that $\mathfrak{M}$ satisfies $\vec{x} \boto \vec{y}$ under $s$, in symbols $\mathfrak{M} \models_s \vec{x} \boto \vec{y}$, if 
	\[ s(\vec{x}) \pureindep[\emptyset] s(\vec{y}). \]
	%
	%
	Let $\Sigma$ be a set of atoms and let $s$ be such that the set of variables occurring in $\Sigma$ is included in $\mathrm{dom}(s)$. We say that $\mathfrak{M}$ satisfies $\Sigma$ under $s$, in symbols $\mathfrak{M} \models_s \Sigma$, if $\mathfrak{M}$  satisfies every atom in $\Sigma$ under $s$. 
	%
	%
	%
	We say that $\vec{x} \boto \vec{y}$ is a logical consequence of $\Sigma$, in symbols $\Sigma \models \vec{x} \boto \vec{y}$, if for every $s$ such that the set of variables occurring in $\Sigma \cup \left\{ \vec{x} \boto \vec{y} \right\}$ is included in $\mathrm{dom}(s)$ we have that
	\[ \text{ if } \; \mathfrak{M} \models_s \Sigma \; \text{ then } \; \mathfrak{M} \models_s \; \vec{x} \boto \vec{y}. \]
	%
	%
	\smallskip

	\noindent As made clear by the notation used, the system $\mathrm{AIRAIndL}(\pureindep)$ depends on the particular pre-independence relation $\pureindep$ considered. In the next three theorems we show that, although the system is sound independently of the choice of $\pureindep$, in order to have completeness we need some further assumptions, namely federation and admissibility of an algebraic point. The generality at which the subject is developed allowed us to realize that these conditions are not only sufficient conditions for a completeness result, but also necessary. Sufficiency and necessity of these conditions are shown in Theorems \ref{completeness_indep} and \ref{characterization}, respectively.

	\begin{theorem}\label{soundness_indep} $\mathrm{AIRAIndL}(\pureindep)$ is sound.

	\end{theorem}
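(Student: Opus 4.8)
The plan is to prove soundness by induction on the length of a deduction. Fix a set of atoms $\Sigma$, an atom $\vec{x} \boto \vec{y}$ with $\Sigma \vdash \vec{x} \boto \vec{y}$, and an assignment $s$ whose domain contains every variable occurring in $\Sigma \cup \{ \vec{x} \boto \vec{y} \}$ and satisfies $\mathfrak{M} \models_s \Sigma$. It then suffices to show that every atom $\phi_i$ occurring in a deduction $(\phi_0, \ldots, \phi_{n-1})$ from $\Sigma$ satisfies $\mathfrak{M} \models_s \phi_i$, since this yields $\mathfrak{M} \models_s \vec{x} \boto \vec{y}$ in particular. The base cases are immediate: if $\phi_i \in \Sigma$ then $\mathfrak{M} \models_s \phi_i$ by hypothesis, and if $\phi_i$ is an instance $\vec{x} \boto \emptyset$ of axiom ($a_3.$), then unwinding the semantics reduces the claim to $s(\vec{x}) \pureindep[\emptyset] \emptyset$, which follows from Existence (giving $\emptyset \pureindep[\emptyset] s(\vec{x})$) together with Symmetry.

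For the inductive step I would translate each of the rules ($b_3.$)--($g_3.$) into an assertion about the relation $\pureindep[\emptyset]$ and verify it from the pre-independence axioms. Concretely, rule ($b_3.$) is exactly Symmetry; rule ($c_3.$), passing from $\vec{x} \boto \vec{y}\vec{z}$ to $\vec{x} \boto \vec{y}$, follows by Symmetry, then Monotonicity (to drop $s(\vec{z})$ on the right), then Symmetry again; and rule ($e_3.$), passing from $x \boto x$ to $x \boto \vec{y}$, is precisely the Anti-Reflexivity axiom with empty base. The crucial rule is ($d_3.$), which from $\vec{x} \boto \vec{y}$ and $\vec{x}\vec{y} \boto \vec{z}$ concludes $\vec{x} \boto \vec{y}\vec{z}$; under the interpretation this reads ``$s(\vec{x}) \pureindep[\emptyset] s(\vec{y})$ and $s(\vec{x})s(\vec{y}) \pureindep[\emptyset] s(\vec{z})$ imply $s(\vec{x}) \pureindep[\emptyset] s(\vec{y})s(\vec{z})$,'' which is exactly the Exchange corollary with base $D = \emptyset$ established earlier.

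The remaining two rules, ($f_3.$) (permutation) and ($g_3.$) (duplication of a variable), call for a separate but trivial observation rather than a structural axiom: because the semantics evaluates a tuple of variables through its image set $s(\cdot) \subseteq \mathfrak{M}$, permuting the entries of $\vec{x}$ or $\vec{y}$, or repeating a variable, leaves the sets $s(\vec{x})$ and $s(\vec{y})$ unchanged, so the premise and conclusion of each of these rules are interpreted by literally the same instance of $\pureindep[\emptyset]$. I expect no genuine obstacle in this argument; its only real content is the dictionary identifying ($d_3.$) with Exchange and ($e_3.$) with Anti-Reflexivity, and the one point deserving explicit comment is that ($f_3.$) and ($g_3.$) are sound precisely because the semantics is set-based, so that the distinction between sequences and their underlying sets collapses. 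Assembling these verifications completes the induction and hence the proof.
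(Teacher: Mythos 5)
Your proof is correct and follows essentially the same route as the paper: both verify each rule ($a_3.$)--($g_3.$) against the pre-independence axioms, matching ($d_3.$) with Exchange, ($e_3.$) with Anti-Reflexivity, and disposing of ($f_3.$) and ($g_3.$) by the set-based reading of $s(\vec{x})$. Your explicit framing as an induction on the length of the deduction, and your remark that ($c_3.$) needs Symmetry around Monotonicity to drop elements on the right, only make explicit what the paper leaves implicit.
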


	\begin{proof} Let $s$ an appropriate assignment.
	\smallskip

	\noindent	($a_3.$) By Existence, $\emptyset \pureindep[\emptyset] \vec{a}$ for any $\vec{a} \in \mathfrak{M}^{< \omega}$. Thus, by Symmetry, we have $\mathfrak{M} \models_s s(\vec{x}) \pureindep[\emptyset] \emptyset$.
	\smallskip

	\noindent	($b_3.$) \[ \begin{array}{rcl}
		\mathfrak{M} \models_s \vec{x} \boto \vec{y} & \;\;\; \Longrightarrow & \;\;\; s(\vec{x}) \pureindep[\emptyset] s(\vec{y}) \\
		        											  & \;\;\; \Longrightarrow & \;\;\; s(\vec{y}) \pureindep[\emptyset] s(\vec{x}) \;\;\;\; [\text{By Symmetry}]\\
														      & \;\;\; \Longrightarrow & \;\;\; \mathfrak{M} \models_s \vec{y} \boto \vec{x}.	

	\end{array}
		 \]
	\smallskip

	\noindent	($c_3.$) \[ \begin{array}{rcl}
		\mathfrak{M} \models_s \vec{x} \boto \vec{y} \vec{z}  & \;\;\; \Longrightarrow & \;\;\; s(\vec{x}) \pureindep[\emptyset] s(\vec{y}  \vec{z}) \\
		        											  				  & \;\;\; \Longrightarrow & \;\;\; s(\vec{x}) \pureindep[\emptyset] s(\vec{y}) \;\;\;\;\;\, [\text{By Monotonicity}]\\
														      				  & \;\;\; \Longrightarrow & \;\;\; \mathfrak{M} \models_s \vec{x} \boto \vec{y}.	

	\end{array}
		 \]
	\smallskip

	\noindent	($d_3.$) \[ \begin{array}{rcl}
		 &\;\;\, \mathfrak{M} \models_s \vec{x} \boto \vec{y}  \text{ and } \mathfrak{M} \models_s \vec{x}  \vec{y} \boto \vec{z}           &  \\
		        											  &  \Downarrow & \\
		 & s(\vec{x}) \pureindep[\emptyset] s(\vec{y}) \text{ and }	s(\vec{x})s(\vec{y}) \pureindep[\emptyset] s(\vec{z})	       &  \\          
		  													  &  \Downarrow & \\
		  													  &  s(\vec{x}) \pureindep[\emptyset] s(\vec{y})s(\vec{z}) & [\text{By Exchange}]\\
		  													  &  \Downarrow & \\
		  													  &  \mathfrak{M} \models_s \vec{x} \boto \vec{y}  \vec{z}. & \\

	\end{array}
		 \]
	\smallskip

	\noindent	($e_3.$) Suppose that $\mathfrak{M} \models_s x \boto x$, then $s(x) \pureindep[\emptyset] s(x)$ and so by Anti-Reflexivity we have $s(x) \pureindep[\emptyset] s(\vec{y})$ for any $\vec{y} \in \mathrm{Var}$.
	\smallskip

	\noindent	($f_3.$) Obvious.

	\smallskip

	\noindent	($g_3.$) This is clear because $\pureindep$ is a ternary relation between {\em subsets} of $\mathfrak{M}$, rather than sequences.

	\end{proof}

		\begin{theorem}\label{completeness_indep} If $\pureindep$ is federated and admits an algebraic point, then $\mathrm{AIRAIndL}(\pureindep)$ is complete.

	\end{theorem}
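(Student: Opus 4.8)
The plan is to prove the contrapositive: assuming $\Sigma \not\vdash \vec{x} \boto \vec{y}$, I will build an assignment $s$ into $\mathfrak{M}$ with $\mathfrak{M} \models_s \Sigma$ but $\mathfrak{M} \not\models_s \vec{x} \boto \vec{y}$, so that $\Sigma \not\models \vec{x} \boto \vec{y}$. The starting point is the completeness theorem for team semantics (\cite{geiger1991axioms}, \cite{galliani_and_vaananen}). The structural input I would rely on is that a refuting team for the independence atom may be taken to be a \emph{product (coordinate) model}: there are a set of coordinates $P = \{p_i\}_i$ and a map $\phi$ sending each variable $v$ to a set $\phi(v) \subseteq P$, such that in the associated product team an atom $\vec{u} \boto \vec{w}$ holds exactly when $\phi(\vec{u}) \cap \phi(\vec{w}) = \emptyset$, where $\phi(\vec{u}) = \bigcup_{v \in \vec{u}} \phi(v)$. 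Reading the rules of $\mathrm{AIndL}$ under this disjointness interpretation shows they are all sound for coordinate models, and the completeness proof of \cite{geiger1991axioms} is in fact witnessed by models of this form; hence from $\Sigma \not\vdash \vec{x} \boto \vec{y}$ I obtain a $\phi$ with $\phi(\vec{u}) \cap \phi(\vec{w}) = \emptyset$ for every $\vec{u} \boto \vec{w} \in \Sigma$ and $\phi(\vec{x}) \cap \phi(\vec{y}) \neq \emptyset$. (We may assume $\Sigma$ finite, so finitely many coordinates suffice.)

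Next I would transfer this combinatorial datum into $\mathfrak{M}$ using the two hypotheses. Since $\pureindep$ is federated, fix a federated sequence $(a_i \mid i < \omega)$ and identify the coordinates of $P$ with distinct members $a_i$; since $\pureindep$ admits an algebraic point, fix $e \in \mathfrak{M}$ with $e \pureindep[\emptyset] e$. Define
\[ s(v) = \{ a_i \mid p_i \in \phi(v) \} \ \text{ if } \phi(v) \neq \emptyset, \qquad s(v) = e \ \text{ if } \phi(v) = \emptyset, \]
so that $s(\vec{u}) = \{ a_i \mid p_i \in \phi(\vec{u}) \}$, possibly together with $e$. The goal is the equivalence, for all finite tuples $\vec{u}, \vec{w}$,
\[ \phi(\vec{u}) \cap \phi(\vec{w}) = \emptyset \iff s(\vec{u}) \pureindep[\emptyset] s(\vec{w}), \]
which at once gives $\mathfrak{M} \models_s \Sigma$ and $\mathfrak{M} \not\models_s \vec{x} \boto \vec{y}$, finishing the proof.

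For the forward direction, disjoint coordinate sets give disjoint subtuples of the federated sequence, which are independent over $\emptyset$ by Lemma~\ref{lemma_indep_seq}; the algebraic point $e$, if it occurs on either (or both) sides, is absorbed harmlessly, since $e \pureindep[\emptyset] e$ together with Anti-Reflexivity yields $e \pureindep[\emptyset] C$ for all $C$, and a routine computation with Symmetry, Base Monotonicity, Monotonicity and Transitivity lets one add $e$ to either side of an independence without destroying it. For the converse I use two facts. First, every member $a_i$ of a federated sequence is non-algebraic, i.e.\ $a_i \not\!\pureindep[\emptyset] a_i$: applying federation to the initial segment ending at $a_i$ gives a point $d$ with $d \not\!\pureindep[\emptyset] a_0 \cdots a_i$ but $d \pureindep[\emptyset] a_0 \cdots a_{i-1}$, and were $a_i$ algebraic, Anti-Reflexivity, Base Monotonicity and Transitivity would force $d \pureindep[\emptyset] a_0 \cdots a_i$, a contradiction. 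Second, if $\phi(\vec{u}) \cap \phi(\vec{w})$ contains a coordinate $p_i$, then $a_i \in s(\vec{u}) \cap s(\vec{w})$, and were $s(\vec{u}) \pureindep[\emptyset] s(\vec{w})$, two applications of Monotonicity (with Symmetry) would give $a_i \pureindep[\emptyset] a_i$, contradicting non-algebraicity; hence $s(\vec{u}) \not\!\pureindep[\emptyset] s(\vec{w})$.

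The main obstacle is the very first step: justifying that a non-derivable independence atom is refuted by a \emph{coordinate} model, i.e.\ that the implication problem for the independence atom is decided by disjointness of the variable-to-coordinate assignment. This is the precise way in which the cited completeness theorem must be invoked, and it is what reduces the problem to the clean datum $\phi$. Everything afterwards is bookkeeping with the pre-independence axioms; the only delicate points there are the non-algebraicity of the members of a federated sequence (where federation and Anti-Reflexivity are essential) and the harmless absorption of the algebraic point, which is exactly what makes the hypothesis ``admits an algebraic point'' necessary, matching the necessity direction (Theorem~\ref{characterization}).
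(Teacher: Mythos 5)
There is a genuine gap, and it sits at the very first step you yourself flag as the main obstacle. The claim that a non-derivable independence atom is always refuted by a \emph{coordinate (disjointness) model} is false. Take $\Sigma = \left\{ x\boto y,\ y\boto z,\ x\boto z \right\}$. In any disjointness model of $\Sigma$ the sets $\phi(x),\phi(y),\phi(z)$ are pairwise disjoint, so $(\phi(x)\cup\phi(y))\cap\phi(z)=\emptyset$ and the atom $xy\boto z$ holds; yet $\Sigma\nvdash xy\boto z$ (the XOR team $\left\{(0,0,0),(0,1,1),(1,0,1),(1,1,0)\right\}$ on $(x,y,z)$ satisfies $\Sigma$ but not $xy\boto z$, so soundness for team semantics gives non-derivability). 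Hence disjointness models do not witness the completeness theorem of Geiger--Paz--Pearl: their refuting constructions are parity-like precisely because pairwise independence must not entail joint independence. This defeats your transfer step as well: your assignment takes values only among the $a_i$ and the algebraic point $e$, and on such tuples $\pureindep$ \emph{is} governed by disjointness (disjoint subtuples are independent by Lemma~\ref{lemma_indep_seq}, overlapping ones are dependent by non-algebraicity of the $a_i$), so you can only refute atoms that disjointness models refute. Tellingly, you never use the federation witness $d$ as a value of the assignment --- you use federation only to extract an independent sequence of non-algebraic points, which is strictly weaker than federation, and the necessity direction (Theorem~\ref{characterization}) already shows such a weakening cannot suffice.

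The paper's proof makes no appeal to the team-semantics completeness theorem. It first reduces, purely syntactically, to a \emph{minimal} non-derivable atom $\vec{x}\boto\vec{y}$ none of whose variables is provably algebraic, and then builds the refuting assignment directly: all variables of $\vec{x}\vec{y}$ except one distinguished $x_0$ are sent to distinct members of a federated sequence, $x_0$ is sent to the federation witness $d$ (dependent on the whole tuple but independent from every proper subtuple), and every remaining variable is sent to the algebraic point $e$. It is exactly the value $d$ that reproduces the XOR phenomenon inside $\mathfrak{M}$; the verification that every atom of $\Sigma$ survives while $\vec{x}\boto\vec{y}$ fails then rests on the minimality of $\vec{x}\boto\vec{y}$ and a case analysis on how a premise $\vec{v}\boto\vec{w}\in\Sigma$ meets $\vec{x}\vec{y}$. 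To salvage your route you would need a normal form recording not just which coordinates each variable touches but also a parity-like mode of dependence on them --- at which point you would essentially be reconstructing the paper's direct argument.
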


	\begin{proof}

	\noindent Let $\Sigma$ be a set of atoms and suppose that $\Sigma \nvdash \vec{x} \boto \vec{y}$. Notice that if this is the case then $\vec{x} \neq \emptyset$ and $\vec{y} \neq \emptyset$. Indeed if $\vec{y} = \emptyset$ then $\Sigma \vdash \vec{x} \boto \vec{y}$ because by rule ($a_3.$) $\vdash \vec{x} \boto \emptyset$. Analogously if $\vec{x} = \emptyset$ then $\Sigma \vdash \vec{x} \boto \vec{y}$ because by rule ($a_3.$) $\vdash \vec{y} \boto \emptyset$ and so by rule ($b_3.$) $\vdash \emptyset \boto \vec{y}$. 
		Furthermore we can assume that $\vec{x} \boto \vec{y}$ is minimal, in the sense that if $\vec{x}' \subseteq \vec{x}$, $\vec{y}' \subseteq \vec{y}$ and $\vec{x}' \neq \vec{x}$ or $\vec{y}' \neq \vec{y}$, then $\Sigma \vdash \vec{x}' \boto \vec{y}'$. This is for two reasons. 
		\begin{enumerate}[i)]
			\item If $\vec{x} \boto \vec{y}$ is not minimal we can always find a minimal atom $\vec{x}^* \boto \vec{y}^*$ such that $\Sigma \nvdash \vec{x}^* \boto \vec{y}^*$, $\vec{x}^* \subseteq \vec{x}$ and $\vec{y}^* \subseteq \vec{y}$ --- just keep deleting elements of $\vec{x}$ and $\vec{y}$ until you obtain the desired property or until both $\vec{x}^*$ and $\vec{y}^*$ are singletons, in which case, due to the trivial independence rule ($a_3.$), $\vec{x}^* \boto \vec{y}^*$ is a minimal statement. 
			\item For any $\vec{x}' \subseteq \vec{x}$, $\vec{y}' \subseteq \vec{y}$ and assignment $s$ we have that if $\mathfrak{M} \not\models_s \vec{x}' \boto \vec{y}'$ then $\mathfrak{M} \not\models_s \vec{x} \boto \vec{y}$.

	\end{enumerate}
	Let indeed $\vec{x} = \vec{x}'\vec{x}''$ and $\vec{y} = \vec{y}'\vec{y}''$, then
		\[ \begin{array}{rcl}
			\mathfrak{M} \models_s \vec{x}'  \vec{x}'' \boto \vec{y}' \vec{y}''  & \;\;\; \Longrightarrow & \;\;\; s(\vec{x}') s(\vec{x}'') \pureindep[\emptyset] s(\vec{y}') s(\vec{y}'') \\
			        											  & \;\;\; \Longrightarrow & \;\;\; s(\vec{x}') \pureindep[\emptyset] s(\vec{y}')\;\;\;\;\;\;\;\;\;\;\;\;\;\;\;\;\; [\text{By Monotonicity}]\\

															      & \;\;\; \Longrightarrow & \;\;\; \mathfrak{M} \models_s \vec{x}' \boto \vec{y}'.	

	\end{array}
			 \]
	\smallskip 

	\noindent	Let $V = \left\{ v \in \mathrm{Var} \; | \; \Sigma \vdash v \boto v \right\}$, $W = \mathrm{Var} - V$, $\vec{x} \cap W = \vec{x}'$ and $\vec{y} \cap W = \vec{y}'$.
	\smallskip

	\noindent \begin{claim}\label{claim_for_proof} If $\Sigma \vdash \vec{x}' \boto \vec{y}'$, then $\Sigma \vdash \vec{x} \boto \vec{y}$.

	\end{claim}

		\begin{claimproof} Let $\vec{x} - \vec{x}' = (x_{s_0}, ..., x_{s_{b-1}})$ and $\vec{y} - \vec{y}' = (y_{g_0}, ..., y_{g_{c-1}})$, then by rules ($e_3.$), ($b_3.$) and ($d_3.$) we have that

			\[ \begin{array}{rcl}
			  \Sigma \vdash \vec{x}' \boto \vec{y}'  & \text{ and } & \Sigma \vdash \vec{x}'  \vec{y}' \boto y_{g_0}            \\
			        											  &  \Downarrow & \\
			 &	\Sigma \vdash \vec{x}' \boto \vec{y}'  y_{g_0}         &  \\
			  													  &  \vdots & \\
			  \Sigma \vdash \vec{x}' \boto \vec{y}' y_{g_0} \cdots y_{g_{c-2}}  & \text{ and } & \Sigma \vdash \vec{x}'  \vec{y}'  y_{g_0} \cdots y_{g_{c-2}} \boto y_{g_{c-1}}      \\
															  	&  \Downarrow & \\
			&	\Sigma \vdash \vec{x}' \boto \vec{y}'  y_{g_0} \cdots y_{g_{c-1}}        &  
		\end{array}
			 \]
	and hence by rules ($f_3.$) and ($b_3.$) we have that $\Sigma \vdash \vec{y} \boto \vec{x}'$. Thus 

		\[ \begin{array}{rcl}
		  \Sigma \vdash \vec{y} \boto \vec{x}'  & \text{ and } & \Sigma \vdash \vec{y}  \vec{x}' \boto x_{s_0}            \\
		        											  &  \Downarrow & \\
		 &	\Sigma \vdash \vec{y} \boto \vec{x}'  x_{s_0}         &  \\
		  													  &  \vdots & \\
		  \Sigma \vdash \vec{y} \boto \vec{x}'  x_{s_0} \cdots x_{s_{b-2}} & \text{ and } & \Sigma \vdash \vec{y}  \vec{x}'  x_{s_0} \cdots x_{s_{b-2}} \boto x_{s_{b-1}}      \\
														  	&  \Downarrow & \\
		&	\Sigma \vdash \vec{y} \boto \vec{x}'   x_{s_0} \cdots x_{s_{b-1}}      &  
	\end{array}
		 \]
	and hence by rule ($f_3.$) and ($b_3.$) we have that $\Sigma \vdash \vec{x} \boto \vec{y}$.

	\end{claimproof}

	\smallskip 

	\noindent The claim above shows that if $\vec{x} \boto \vec{y}$ is minimal, then for every $z \in \vec{x}\vec{y}$ we have that $\Sigma \nvdash z \boto z$. Furthermore, because of rule ($g_3.$) we can assume that $\vec{x}$ and $\vec{y}$ are injective. This will be relevant in the following. We now make a case distinction. 
	\smallskip 

	\noindent \textbf{Case 1.} There exists $z \in \vec{x} \cap \vec{y}$. Notice that by assumption there exist an algebraic point $e \in \mathfrak{M}$ and a federated sequence $(a_0) \in \mathfrak{M}^1$. Let $s$ be the following assignment:	
	\begin{enumerate}[i)]
			\item $s(v) = e$ for every $v \in \mathrm{Var} - z$,
			\item $s(z) = d$.
	\end{enumerate}
	where $d$ is obtained using the defining condition of federation of the independent sequence $(a_0)$. 

	\smallskip

	\noindent Obviously $\mathfrak{M} \not\models_s \vec{x} \boto \vec{y}$, in fact $\mathfrak{M} \not\models_s z \boto z$. This is because if $d \pureindep[\emptyset] d$, then $d \pureindep[\emptyset] a_0$, contrary to the choice of $d$. Furthermore, for every $\vec{v} \boto \vec{w} \in \Sigma$, we have that $\mathfrak{M} \models_s \vec{v} \boto \vec{w}$. Let indeed $\vec{v} \boto \vec{w} \in \Sigma$, then $z \notin \vec{v} \cap \vec{w}$, because otherwise, by rule ($c_3.$), we would have that $\Sigma \vdash z \boto z$, contrary to the minimality of $\vec{x} \boto \vec{y}$. Hence $\mathfrak{M} \models_s \vec{v} \boto \vec{w}$, because by the choice of $e$ we have that $e \pureindep[\emptyset] ed$.

	\smallskip

	\noindent	\textbf{Case 2.} $\vec{x} \cap \vec{y} = \emptyset$.

	\noindent Let $\vec{x} = (x_0, ..., x_{n-1})$, $\vec{y} = (y_0, ..., y_{m-1})$ and $k = (n-1) + m$. Let then $(w_i \; | \; i < k)$ be an injective enumeration of $\vec{x}  \vec{y} - x_0$ with $w_i = x_{i+1}$ for $i \leq n-2$ and $w_{i+{(n-1)}} = y_{i}$ for $i \leq m-1$. Notice that by assumption there exist a constant point $e \in \mathfrak{M}$ and an federated sequence $(a_i \; | \; i < k) \in \mathfrak{M}^{k}$.
		Let then $s$ be the following assignment:
		\begin{enumerate}[i)]
			\item $s(v) = e$ for every $v \in \mathrm{Var} - \vec{x}  \vec{y}$,
			\item $s(w_i) = a_i $ for every $i < k$,
			\item $s(x_{0}) = d$,
	\end{enumerate}
	where $d$ is obtained using the defining condition of federation of the independent sequence $(a_i \; | \; i < k)$.

	\smallskip

	\noindent	We claim that $\mathfrak{M} \not\models_s \vec{x} \boto \vec{y}$. By the choice of $d$, we have that $d \not\!\pureindep[\emptyset] a_0 \cdots a_{k-1}$. Suppose that $d a_0 \cdots a_{n-2} \pureindep[\emptyset] a_{n-1} \cdots a_{k-1}$. Again by the choice of $d$, we have $d \pureindep[\emptyset] a_0 \cdots a_{n-2}$, so by Exchange we have $d \pureindep[\emptyset] a_0 \cdots a_{k-1}$, a contradiction. Thus, $d a_0 \cdots a_{n-2} \not\!\pureindep[\emptyset] a_{n-1} \cdots a_{k-1}$ and hence $s(\vec{x}) \not\!\pureindep[\emptyset] s(\vec{y})$.
	\smallskip

	\noindent	Let now $\vec{v} \boto \vec{w} \in \Sigma$, we want to show that $\mathfrak{M} \models_s \vec{v} \boto \vec{w}$. 
	Let $\vec{v} \cap \vec{x}  \vec{y} = \vec{v}'$ and $\vec{w} \cap \vec{x}  \vec{y} = \vec{w}'$.
		Notice that 
		\[ s(\vec{v}) \pureindep[\emptyset] s(\vec{w}) \text{ if and only if } s(\vec{v}') \pureindep[\emptyset] s(\vec{w}'). \]
	Left to right holds in general. As for the other direction, suppose that $s(\vec{v}') \pureindep[\emptyset] s(\vec{w}')$. If $u \in \vec{v}  \vec{w} - \vec{v}'  \vec{w}'$, then $s(u) = e$. Thus

		 \[ \begin{array}{rcl}
		 &\;\;\; s(\vec{v}') \pureindep[\emptyset] s(\vec{w}')  \text{ and } s(\vec{v}')  s(\vec{w}') \pureindep[\emptyset] e         & [\text{By Anti-Reflexivity}]  \\
		        											  &  \Downarrow & \\
		 &\;\;\;\;\, s(\vec{v}')  \pureindep[\emptyset] s(\vec{w}')  e          &  \\
		  													  &  \Downarrow & \\
		  													  &  \;\,\,s(\vec{v}')  \pureindep[\emptyset] s(\vec{w}). &
	\end{array}
		 \]
	So
		\[ \begin{array}{rcl}
	 	&\;\;\; s(\vec{w}) \pureindep[\emptyset] s(\vec{v}')  \text{ and } s(\vec{w})  s(\vec{v}') \pureindep[\emptyset] e          & 			   [\text{By Anti-Reflexivity}]  \\
	        											  &  \Downarrow & \\
	 	&\;\;\;\; s(\vec{w})  \pureindep[\emptyset] s(\vec{v}')  e          &  \\
	  													  &  \Downarrow & \\
	  													  &  \;\,s(\vec{w})  \pureindep[\emptyset] s(\vec{v}). &
	\end{array}
	 \]

	\smallskip

	\noindent Notice that $\vec{v}' \cap \vec{w}' = \emptyset$. Indeed, suppose that there exists $z \in \vec{v}' \cap \vec{w}'$, then, by rule ($c_3.$), we have that $\Sigma \vdash z \boto z$, contrary to the minimality of $\vec{x} \boto \vec{y}$. We make another case distinction.
	\smallskip

	\noindent	\textbf{Subcase 1.} $x_{0} \notin \vec{v}' \vec{w}'$. 
		 As noticed, $\vec{v}' \cap \vec{w}' = \emptyset$, and so, by properties of our assignment $s(\vec{v}') \cap s(\vec{w}') = \emptyset$. Thus, by Lemma~\ref{lemma_indep_seq}, it follows that $s(\vec{v}') \pureindep[\emptyset] s(\vec{w}')$.
	\smallskip

	\noindent	\textbf{Subcase 2.} $x_{0} \in \vec{v}' \vec{w}'$. 
	\smallskip

	\noindent	\textbf{Subcase 2.1.} $(\vec{x}  \vec{y}) - (\vec{v}'  \vec{w}') \neq \emptyset$. 
			Let $\vec{a} = s(\vec{v}') - d $ and $\vec{b} = s(\vec{w}') - d$. By assumption we have that $(\vec{x} - x_0) \cup \vec{y} \not\subseteq \vec{v}'  \vec{w}'$ and so $s((\vec{x} - x_0) \cup \vec{y}) \not\subseteq \vec{a}  \vec{b}$. Thus, by the choice of $d$, we have that $\vec{a} \vec{b} \pureindep[\emptyset] d$. Suppose now that $x_{0} \in \vec{w}'$, the other case is symmetrical. By properties of our assignment $\vec{a} \cap \vec{b} = \emptyset$, hence by Lemma~\ref{lemma_indep_seq}, we have that $\vec{a} \pureindep[\emptyset] \vec{b}$. Thus, by Exchange, $\vec{a} \pureindep[\emptyset] \vec{b} d$. Hence, permuting the elements in $\vec{b} d$, we conclude that $s(\vec{v}) \pureindep[\emptyset] s(\vec{w})$.
	\smallskip

	\noindent	\textbf{Subcase 2.2.} $\vec{x}  \vec{y} \subseteq \vec{v}'  \vec{w}'$. 
			This case is not possible. By rule ($f_3.$) and ($c_3.$) we can assume that $\vec{v} = \vec{v}'  \vec{u}$ and $\vec{w} = \vec{w}'  \vec{u}'$ with $\vec{u}  \vec{u}' \subseteq \mathrm{Var} - \vec{v}'  \vec{w}'$. Furthermore because $\vec{x}  \vec{y} \subseteq \vec{v}'  \vec{w}'$ again by rule ($f_3.$) we can assume that $\vec{v}' = \vec{x}'  \vec{y}'  \vec{z}'$ and $\vec{w}' = \vec{x}''  \vec{y}''  \vec{z}''$ with $\vec{x}'  \vec{x}'' = \vec{x}$, $\vec{y}'  \vec{y}'' = \vec{y}$ and $\vec{z}' \vec{z}'' \subseteq \mathrm{Var} - \vec{x}  \vec{y}$. Hence $\vec{v} = \vec{x}'  \vec{y}'  \vec{z}'  \vec{u}$ and $\vec{w} = \vec{x}''  \vec{y}''  \vec{z}''  \vec{u}'$.
			By hypothesis we have that $\vec{v} \boto \vec{w} \in \Sigma$ so by rules ($c_3.$) and ($b_3.$) we can conclude that $\Sigma \vdash \vec{x}'  \vec{y}' \boto \vec{x}''  \vec{y}''$.
			If $\vec{x}' = \vec{x}$ and $\vec{y}'' = \vec{y}$, then $\Sigma \vdash \vec{x} \boto \vec{y}$ because as we noticed $\vec{v}' \cap \vec{w}' = \emptyset$, a contradiction. Analogously if $\vec{x}'' = \vec{x}$ and $\vec{y}' = \vec{y}$, then $\Sigma \vdash \vec{y} \boto \vec{x}$. Thus by rule ($b_3.$) $\Sigma \vdash \vec{x} \boto \vec{y}$, a contradiction. There are then four cases:
			\begin{enumerate}[i)]
				\item $\vec{x}' \neq \vec{x}$ and $\vec{x}'' \neq \vec{x}$;
				\item $\vec{y}' \neq \vec{y}$ and $\vec{x}'' \neq \vec{x}$;
				\item $\vec{y}' \neq \vec{y}$ and $\vec{y}'' \neq \vec{y}$;
				\item $\vec{x}' \neq \vec{x}$ and $\vec{y}'' \neq \vec{y}$.

		\end{enumerate}
			Suppose that either i) or ii) holds. If this is the case, then $\Sigma \vdash \vec{x}' \boto \vec{y}'$ because by hypothesis $\vec{x} \boto \vec{y}$ is minimal. So $\Sigma \vdash \vec{x}' \boto \vec{y}'  \vec{x}''  \vec{y}''$, because by rule ($d_3.$) 
			\[ \Sigma \vdash \vec{x}' \boto \vec{y}' \text{ and } \Sigma \vdash \vec{x}'  \vec{y}' \boto \vec{x}''  \vec{y}''\ \Rightarrow \Sigma \vdash \vec{x}' \boto \vec{y}'  \vec{x}''  \vec{y}''.\]
			Hence by rule ($e_3.$) $\Sigma \vdash \vec{x}' \boto \vec{x}''  \vec{y}$ and then by rule ($b_3.$)  $\Sigma \vdash \vec{x}''  \vec{y} \boto \vec{x}'$. So by rule ($e_3.$) $\Sigma \vdash \vec{y}  \vec{x}'' \boto \vec{x}'$.
			We are under the assumption that $\vec{x}'' \neq \vec{x}$ thus again by minimality of $\vec{x} \boto \vec{y}$ we have that $\Sigma \vdash \vec{x}'' \boto \vec{y}$ and so by rule ($b_3.$) we conclude that $\Sigma \vdash \vec{y} \boto \vec{x}''$. Hence $\Sigma \vdash \vec{y} \boto \vec{x}''  \vec{x}'$, because by rule ($d_3.$) 
			\[ \Sigma \vdash \vec{y} \boto \vec{x}'' \text{ and } \Sigma \vdash \vec{y}  \vec{x}'' \boto \vec{x}' \Rightarrow \Sigma \vdash \vec{y} \boto \vec{x}''  \vec{x}'.\]
			Then finally by rules ($e_3.$) and ($b_3.$) we can conclude that $\Sigma \vdash \vec{x} \boto \vec{y}$, a contradiction.
			The case in which either iii) or iv) holds is symmetrical.
\smallskip

\noindent This concludes the proof of the theorem.

	\end{proof}
	
	\begin{theorem}\label{characterization} If $\mathrm{AIRAIndL}(\pureindep)$ is complete, then $\pureindep$ is federated and admits an algebraic point.	
		
\end{theorem}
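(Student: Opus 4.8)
The plan is to prove the contrapositive: assuming $\mathrm{AIRAIndL}(\pureindep)$ is complete, I will show separately that $\pureindep$ admits an algebraic point and that $\pureindep$ is federated. In both parts the non-derivability of the relevant atom will be certified through the completeness theorem for team semantics (the theorem of \cite{geiger1991axioms} and \cite{galliani_and_vaananen}): since $\mathrm{AIRAIndL}(\pureindep)$ and $\mathrm{AIndL}$ share the same deductive system, $\Sigma \nvdash \phi$ holds exactly when some first-order team $X$ falsifies $\phi$ while satisfying $\Sigma$. Thus in each part it suffices to exhibit such a team (to witness non-derivability) and to argue that, if the hypothesis fails, the corresponding $\pureindep$-entailment $\Sigma \models \phi$ nevertheless holds, contradicting completeness.

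For the algebraic point, suppose toward a contradiction that no $a \in \mathfrak{M}$ satisfies $a \pureindep[\emptyset] a$. Take distinct variables $x, y$ and set $\Sigma = \{ x \boto x \}$ and $\phi = y \boto y$. No assignment $s$ into $\mathfrak{M}$ can satisfy $\Sigma$, since $\mathfrak{M} \models_s x \boto x$ would force $s(x)$ to be an algebraic point; hence $\Sigma \models \phi$ holds vacuously. On the other hand the team $X = \{ s_0, s_1 \}$ with $s_0(x) = s_1(x) = 0$, $s_0(y) = 0$ and $s_1(y) = 1$ makes $x$ constant, so $\models_X x \boto x$, while $\not\models_X y \boto y$; hence $\Sigma \nvdash \phi$. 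This contradicts completeness, so an algebraic point exists.

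For federation, suppose $\pureindep$ is not federated, so $\mathrm{IF}(\pureindep; \emptyset) = N$ for some $N \in \omega$ and no federated sequence of length $k = N+1$ exists. Using variables $x_0, w_0, \dots, w_{k-1}$, set
\[ \Sigma = \{ w_0 \cdots w_{j-1} \boto w_j : 1 \leq j \leq k-1 \} \cup \{ x_0 \boto w_0 \cdots \widehat{w_j} \cdots w_{k-1} : 0 \leq j \leq k-1 \} \]
(the hat denoting omission) and $\phi = x_0 \boto w_0 \cdots w_{k-1}$. I claim that any assignment $s$ with $\mathfrak{M} \models_s \Sigma$ and $\mathfrak{M} \not\models_s \phi$ yields a federated sequence of length $k$, witnessed by $d = s(x_0)$ over $(s(w_0), \dots, s(w_{k-1}))$. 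The first-kind atoms give $(s(w_i) \mid i < j) \pureindep[\emptyset] s(w_j)$ for each $j$, so the tuple is an independent sequence; the second-kind atoms give $d \pureindep[\emptyset] (s(w_i) \mid i < k) - s(w_j)$ for each $j$; and the failure of $\phi$ gives $d \not\!\pureindep[\emptyset] s(w_0) \cdots s(w_{k-1})$. Injectivity is automatic: if $s(w_i) = s(w_j)$ for some $i \neq j$, then since $\pureindep$ relates \emph{sets} the second-kind atom for index $j$ already forces $d \pureindep[\emptyset] \{ s(w_l) : l < k \}$, contradicting the failure of $\phi$. Hence a genuine federated sequence of length $k$ is produced.

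Since no such sequence exists, no assignment refutes $\phi$ over $\Sigma$, so $\Sigma \models \phi$, and completeness gives $\Sigma \vdash \phi$. To contradict this I exhibit a refuting team: let $X$ consist of all $s$ with $s(w_0), \dots, s(w_{k-1}) \in \{ 0, 1 \}$ arbitrary and $s(x_0) = s(w_0) \oplus \cdots \oplus s(w_{k-1})$. Freeness of the $w_i$ makes the first-kind atoms hold; for each $j$ one can realize any prescribed value of $x_0$ by adjusting $w_j$, so the second-kind atoms hold; but the full tuple determines $x_0$, so $\phi$ fails. Thus $\Sigma \nvdash \phi$, the desired contradiction, and $\pureindep$ is federated. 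The main obstacle is this federation part: $\Sigma$ must be tight enough that every $\pureindep$-refutation is forced into the exact federated pattern (the one-dependent, proper-subsets-independent atoms, together with the set-valuedness of $\pureindep$ ruling out collisions), yet loose enough to remain refutable by the parity team so that non-derivability genuinely holds; the algebraic-point part is comparatively routine, turning on the vacuity of the premise $x \boto x$ when no algebraic point is available.
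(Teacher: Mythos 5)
Your proof is correct and follows the same overall strategy as the paper's: the set $\Sigma$ you build for the federation half is exactly the paper's set (independence of the $w_i$ among themselves, plus independence of $x_0$ from every proper subtuple), and in both arguments the failure of federation, resp.\ of an algebraic point, forces the $\pureindep$-entailment $\Sigma \models \phi$ to hold, so that completeness would yield $\Sigma \vdash \phi$. The one genuine divergence is how non-derivability is certified. The paper exhibits a countermodel inside \emph{another} pre-independence relation --- the span relation for $\mathrm{VS}^{\mathrm{inf}}_{\mathbb{Q}}$, with $x_i \mapsto e_i$ and $y$ sent to the sum of the basis vectors --- and appeals to Theorem~\ref{soundness_indep} for that relation; you instead appeal to the soundness half of the Geiger--Paz--Pearl/Galliani--V\"a\"an\"anen theorem for $\mathrm{AIndL}$ and exhibit teams (a two-element team for the algebraic point, and the parity team, which is the $\mathbb{F}_2$-avatar of the paper's vector-space assignment). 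Both certificates are legitimate; yours has the mild advantage of resting only on a theorem already quoted in the paper rather than on a separately verified example of a pre-independence relation, and you also spell out two points the paper leaves implicit: the verification that a refuting assignment really produces a federated sequence, and the injectivity of that sequence (obtained from the set-valuedness of $\pureindep$).
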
 

	\begin{proof} Suppose that $\pureindep$ is not federated and let $n \in \omega^*$ witness this. Let $\Sigma$ be the following set of atoms
		\[ \left\{ x_{0} \cdots x_{i-1} \boto x_{i} \, | \, i < n \right\} \cup \left\{ x_{0} \cdots x_{i-1} x_{i+1} \cdots x_{n-1} \boto y \, | \, i < n \right\},\]
where $\vec{x} = (x_{0}, ..., x_{i-1})$ and $\vec{x} \cap y = \emptyset$.
	Then we have the following validity
			\[\tag{$*$} \Sigma \models \vec{x} \boto y, \]	
but clearly ($*$) is not deducible in our deductive system. Indeed, the theory $\mathrm{VS}^{\mathrm{inf}}_{\mathbb{Q}}$ of non-trivial vector spaces over the field $\mathbb{Q}$ of rational numbers is a counterexample. Let $s: \mathrm{Var} \rightarrow \mathbb{Q}^{n} \preccurlyeq \mathcal{M}$ be the following assignment:
	\[s(v) = \begin{cases} e_i \;\;\;\;\; \text{ if } v = x_{i} \\
						   1 \;\;\;\;\;\; \text{ if } v = y  \\
						   0  \;\;\;\;\;\; \text{ otherwise, }
\end{cases}\]
where, for $j < n$, $e_i(j) = 1$ if $i = j$ and $0$ otherwise. Then clearly 
\[ \mathfrak{M} \models_s \Sigma \text{ but } \mathfrak{M} \not \models_s \vec{x} \boto y. \]
\smallskip
 
\noindent	Suppose that $\pureindep$ does not admit an algebraic point. Then we have we have the following validity
		\[\tag{$**$} x \boto x \models y \boto z, \]
where $y, z \neq x$. But clearly ($**$) is not deducible in our deductive system. Indeed, again $\mathrm{VS}^{\mathrm{inf}}_{\mathbb{Q}}$ is a counterexample. Let $s: \mathrm{Var} \rightarrow \mathbb{Q} \preccurlyeq \mathcal{M}$ be the following assignment:
		\[s(v) = \begin{cases} 0 \;\;\;\;\; \text{ if } v = x \\
							   1 \;\;\;\;\; \text{ if } v = y   \\
							   2  \;\;\;\;\; \text{ otherwise, }
	\end{cases}\]
Then clearly 
	\[ \mathfrak{M} \models_s x \boto x \text{ but } \mathfrak{M} \not \models_s y \boto z. \]

\end{proof}

 As already noticed, the semantics of $\mathrm{AIRAIndL}(\pureindep)$ is parametrized by an $\mathrm{AEC}$ and a fixed pre-independence relation. It is possible to formulate a ``cousin system'' of $\mathrm{AIRAIndL}(\pureindep)$ where the dependency from a particular pre-independence relation is dropped. We do this. Let $(\mathbf{K}, \preccurlyeq)$ be an $\mathrm{AEC}$ with $\mathrm{AP}$, $\mathrm{JEP}$ and $\mathrm{ALM}$, and $\mathfrak{M}$ its monster model. For a pre-independence relation $\pureindep$ on $\mathfrak{M}$, we denote by $\models_{\pureindep}$ the semantical relation of $\mathrm{AIRAIndL}(\pureindep)$. Let $\Sigma$ be a set of atoms, we say that $\vec{x} \boto \vec{y}$ is a logical consequence of $\Sigma$, in symbols $\Sigma \models^* \vec{x} \boto \vec{y}$, if for every pre-independence relation $\pureindep$ on $\mathfrak{M}$ we have that 
\[ \Sigma \models_{\pureindep} \vec{x} \boto \vec{y}. \]

\begin{theorem} Let $\Sigma$ a set of independence atoms. The following are equivalent.
	\begin{enumerate}[(1)]
		\item For some $\pureindep$ which is federated and admits an algebraic point, $\Sigma \models_{\pureindep} \vec{x} \boto \vec{y}$.
		\item For {\em any} $\pureindep$ which is federated and admits an algebraic point, $\Sigma \models_{\pureindep} \vec{x} \boto \vec{y}$.
		\item $\Sigma \models^* \vec{x} \boto \vec{y}$.
		\item $\Sigma \vdash \vec{x} \boto \vec{y}$.
	\end{enumerate}	
\end{theorem}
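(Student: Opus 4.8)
The plan is to establish the four statements as equivalent by proving the cycle of implications $(1) \Rightarrow (4) \Rightarrow (3) \Rightarrow (2) \Rightarrow (1)$, with each link resting either on one of the theorems already in hand or on a trivial quantifier manipulation. The only genuine content is carried by the soundness and completeness results proved above; the remaining two links are purely logical, amounting to specialization and to passing from a universal to an existential assertion.

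First I would prove $(1) \Rightarrow (4)$. Assume there is some federated $\pureindep$ admitting an algebraic point with $\Sigma \models_{\pureindep} \vec{x} \boto \vec{y}$. Then the hypotheses of Theorem~\ref{completeness_indep} are met for this particular $\pureindep$, so $\mathrm{AIRAIndL}(\pureindep)$ is complete and hence $\Sigma \vdash \vec{x} \boto \vec{y}$, which is $(4)$. Next, $(4) \Rightarrow (3)$ is immediate from Theorem~\ref{soundness_indep}: since $\mathrm{AIRAIndL}(\pureindep)$ is sound for \emph{every} pre-independence relation $\pureindep$, the derivation $\Sigma \vdash \vec{x} \boto \vec{y}$ yields $\Sigma \models_{\pureindep} \vec{x} \boto \vec{y}$ for all such $\pureindep$, which is exactly the definition of $\Sigma \models^{*} \vec{x} \boto \vec{y}$.

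For the last two links I would argue as follows. The implication $(3) \Rightarrow (2)$ is a specialization: $(3)$ asserts $\Sigma \models_{\pureindep} \vec{x} \boto \vec{y}$ for every pre-independence relation on $\mathfrak{M}$, so in particular for every federated one admitting an algebraic point, which is $(2)$. Finally, $(2) \Rightarrow (1)$ passes from a universal to an existential statement, and is therefore only legitimate once we know that the class of federated pre-independence relations admitting an algebraic point is non-empty. This is precisely what the examples of Section~\ref{alg_pregeo} supply --- span independence on an infinite-dimensional vector space, or algebraic closure on an algebraically closed field, is federated and admits an algebraic point --- so $(2)$ does entail $(1)$. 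I expect this non-emptiness observation to be the only delicate point of the argument: the two substantive links reduce directly to the soundness and completeness theorems, while everything else is a matter of manipulating the quantifiers implicit in $\models_{\pureindep}$, $\models^{*}$, and the ``some/any'' phrasing of $(1)$ and $(2)$.
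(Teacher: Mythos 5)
Your proposal is correct and takes essentially the same route as the paper: the identical cycle $(1)\Rightarrow(4)\Rightarrow(3)\Rightarrow(2)\Rightarrow(1)$, with the two substantive links supplied by Theorem~\ref{completeness_indep} and Theorem~\ref{soundness_indep} and the remaining two by quantifier manipulation. The only difference is that you explicitly note the non-emptiness of the class of federated pre-independence relations admitting an algebraic point needed for $(2)\Rightarrow(1)$ (supplied by the examples of Section~\ref{alg_pregeo}), a point the paper dismisses as trivial.
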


\begin{proof} (4) implies (3) by soundness (Theorem~\ref{soundness_indep}). (3) implies (2) is trivial, as is (2) implies (1). Finally, (1) implies (4) by Theorem~\ref{completeness_indep}.
	
\end{proof}

We now introduce a conditional version of ($\mathrm{AIRAIndL}(\pureindep)$). The system {\em Abstract Independence Relation Atomic Conditional Independence Logic} ($\mathrm{AIRACIndL}(\pureindep)$) is defined as follows.
	The syntax and deductive system of this logic are the same as those of $\mathrm{ACIndL}$. 
%
%
Let $(\mathbf{K}, \preccurlyeq)$ be an $\mathrm{AEC}$ with $\mathrm{AP}$, $\mathrm{JEP}$ and $\mathrm{ALM}$, and $\pureindep$ a pre-independence relation between (bounded) subsets of the monster model $\mathfrak{M}$.
%
	Let $s: \mathrm{dom}(s) \rightarrow \mathfrak{M}$ with $\vec{x}  \vec{y}  \vec{z} \subseteq \mathrm{dom}(s) \subseteq \mathrm{Var}$. We say that $\mathfrak{M}$ satisfies $\vec{x} \botc{\vec{z}} \vec{y}$ under $s$, in symbols $\mathfrak{M} \models_s \vec{x} \botc{\vec{z}} \vec{y}$, if 
	\[ s(\vec{x}) \pureindep[s(\vec{z})] s(\vec{y}). \]
%
%
	Let $\Sigma$ be a set of atoms and let $s$ be such that the set of variables occurring in $\Sigma$ is included in $\mathrm{dom}(s)$. We say that $\mathfrak{M}$ satisfies $\Sigma$ under $s$, in symbols $\mathfrak{M} \models_s \Sigma$, if $\mathfrak{M}$  satisfies every atom in $\Sigma$ under $s$. 
%
%
%
	We say that $\vec{x} \botc{\vec{z}} \vec{y}$ is a logical consequence of $\Sigma$, in symbols $\Sigma \models \vec{x} \botc{\vec{z}} \vec{y}$, if for every $s$ such that the set of variables occurring in $\Sigma \cup \left\{ \vec{x} \botc{\vec{z}} \vec{y} \right\}$ is included in $\mathrm{dom}(s)$ we have that
\[ \text{ if } \; \mathfrak{M} \models_s \Sigma \; \text{ then } \; \mathfrak{M} \models_s \; \vec{x} \botc{\vec{z}} \vec{y}. \]
%
%
%

\smallskip

\noindent We now show that the system $\mathrm{AIRACIndL}(\pureindep)$ is sound. The proof of this theorem is completely standard, but presented ``for the benefit of the reader".

	\begin{theorem} $\mathrm{AIRACIndL}(\pureindep)$ is sound.

\end{theorem}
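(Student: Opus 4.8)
The plan is to prove soundness by a \emph{rule-by-rule} verification. Under the interpretation $\mathfrak{M} \models_s \vec{x} \botc{\vec{z}} \vec{y}$ iff $s(\vec{x}) \pureindep[s(\vec{z})] s(\vec{y})$, each deduction rule ($a_5.$)--($h_5.$) of $\mathrm{ACIndL}$ becomes an implication between pre-independence statements, and I would check each such implication against the pre-independence axioms together with the two-base Transitivity (the Transitivity proposition) and Exchange (the Exchange corollary) already established in this section. Two rules are immediate: ($h_5.$) holds because $\pureindep$ relates \emph{sets}, so permuting the displayed sequences leaves $s(\vec{x}), s(\vec{y}), s(\vec{z})$ unchanged; and ($a_5.$) is literally Existence, $s(\vec{x}) \pureindep[s(\vec{x})] s(\vec{y})$.

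The remaining graphoid-style rules reduce to short chains of basic axioms. Rule ($b_5.$) is Symmetry. Rule ($c_5.$) follows by applying Monotonicity to each side, interleaved with Symmetry. Rule ($d_5.$) follows from two applications of Normality, each preceded and followed by Symmetry, which inserts the parameter $s(\vec{z})$ into both coordinates. For ($e_5.$) I would pass to the symmetric form $s(\vec{y}) \pureindep[s(\vec{z})] s(\vec{x})$ and $s(\vec{y}) \pureindep[s(\vec{z})s(\vec{x})] s(\vec{u})$, apply the two-base Transitivity to obtain $s(\vec{y}) \pureindep[s(\vec{z})] s(\vec{x})s(\vec{u})$, and finish with Monotonicity and Symmetry. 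Rule ($g_5.$) is exactly the Exchange corollary with base $s(\vec{z})$: from $s(\vec{x}) \pureindep[s(\vec{z})] s(\vec{y})$ and $s(\vec{x})s(\vec{y}) \pureindep[s(\vec{z})] s(\vec{u})$ one gets $s(\vec{x}) \pureindep[s(\vec{z})] s(\vec{y})s(\vec{u})$.

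The main obstacle is rule ($f_5.$), the only rule fed by the constancy (algebraic) atom $\vec{y} \botc{\vec{z}} \vec{y}$ and the only one in which the conditioning set genuinely changes between the two hypotheses. The crucial tool is Anti-Reflexivity: from $s(\vec{y}) \pureindep[s(\vec{z})] s(\vec{y})$ it delivers $s(\vec{y}) \pureindep[s(\vec{z})] C$ for every $C$, so that $s(\vec{y})$ behaves as if it were already part of the base $s(\vec{z})$. Using this with Normality and Symmetry I would first derive $s(\vec{u}) \pureindep[s(\vec{z})] s(\vec{z})s(\vec{y})$; then, starting from the second hypothesis $s(\vec{z})s(\vec{x}) \pureindep[s(\vec{y})] s(\vec{u})$, I would pad the right-hand side with $s(\vec{y})$ by Normality so that Base Monotonicity may legitimately raise the base from $s(\vec{y})$ to $s(\vec{z})s(\vec{y})$, yielding $s(\vec{u}) \pureindep[s(\vec{z})s(\vec{y})] s(\vec{x})$. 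Combining these two via the two-base Transitivity gives $s(\vec{u}) \pureindep[s(\vec{z})] s(\vec{x})$, and a final Symmetry produces the goal $s(\vec{x}) \pureindep[s(\vec{z})] s(\vec{u})$. The delicate point throughout is that Transitivity and Base Monotonicity require the conditioning sets to be \emph{nested}, whereas $s(\vec{z})$ and $s(\vec{y})$ are a priori unrelated as sets; the role of the algebraicity hypothesis and of the Normality-padding is precisely to manufacture the inclusions (such as $s(\vec{z}) \subseteq s(\vec{z})s(\vec{y})$ and $s(\vec{z})s(\vec{y}) \subseteq s(\vec{z})s(\vec{x})s(\vec{y})$) that these nesting-sensitive axioms demand.
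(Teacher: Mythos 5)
Your proposal is correct and follows essentially the same route as the paper: a rule-by-rule verification in which ($a_5.$)--($d_5.$), ($g_5.$), ($h_5.$) fall out of Existence, Symmetry, Monotonicity, Normality, Exchange and set-hood, ($e_5.$) is handled by the two-base Transitivity plus Monotonicity, and ($f_5.$) is the one genuinely delicate case, resolved exactly as the paper does by using Anti-Reflexivity on the constancy hypothesis and Transitivity/Base Monotonicity (with Normality padding) to move the base from $s(\vec{y})$ to $s(\vec{z})s(\vec{y})$ and back down to $s(\vec{z})$. The only cosmetic difference is that the paper closes ($f_5.$) by citing its ($e_5.$) computation rather than re-running the Transitivity argument explicitly.
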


	\begin{proof} Let $s$ an appropriate assignment.
\smallskip

\noindent	($a_5.$) By Existence for any $\vec{a}, \vec{b} \in \mathfrak{M}^{< \omega}$, $\vec{a} \pureindep[\vec{a}] \vec{b}$.
\smallskip
		
\noindent	($b_5.$) \[ \begin{array}{rcl}
		\mathfrak{M} \models_s \vec{x} \botc{\vec{z}} \vec{y} & \;\;\; \Longrightarrow & \;\;\; s(\vec{x}) \pureindep[s(\vec{z})] s(\vec{y}) \\
		        											  & \;\;\; \Longrightarrow & \;\;\; s(\vec{y}) \pureindep[s(\vec{z})] s(\vec{x}) \;\;\;\;\;\; [\text{By Symmetry}]\\
														      & \;\;\; \Longrightarrow & \;\;\; \mathfrak{M} \models_s \vec{y} \botc{\vec{z}} \vec{x}.	
		
	\end{array}
		 \]
\smallskip
	
\noindent	($c_5.$) \[ \begin{array}{rcl}
		\mathfrak{M} \models_s \vec{x}  \vec{x}' \botc{\vec{z}} \vec{y}  \vec{y}'  & \;\;\; \Longrightarrow & \;\;\; s(\vec{x}) s(\vec{x}') \pureindep[s(\vec{z})] s(\vec{y}) s(\vec{y}') \\
																			  & \;\;\; \Longrightarrow & \;\;\; s(\vec{x}) \pureindep[s(\vec{z})] s(\vec{y})
\;\;\; [\text{By Monotonicity}] \\
														      				  & \;\;\; \Longrightarrow & \;\;\; \mathfrak{M} \models_s \vec{x} \botc{\vec{z}} \vec{y}.	

	\end{array}
		 \]
\smallskip

\noindent	($d_5.$) Suppose that $\mathfrak{M} \models_s \vec{x} \botc{\vec{z}} \vec{y}$, then $s(\vec{x}) \pureindep[s(\vec{z})] s(\vec{y})$ and so by Normality $s(\vec{x})  s(\vec{z}) \pureindep[s(\vec{z})] s(\vec{y})$. Now, by Symmetry $s(\vec{y}) \pureindep[s(\vec{z})] s(\vec{x})  s(\vec{z})$, hence again by Normality $s(\vec{y})  s(\vec{z}) \pureindep[s(\vec{z})] s(\vec{x})  s(\vec{z})$, and thus, by Symmetry, $s(\vec{x})  s(\vec{z}) \pureindep[s(\vec{z})] s(\vec{y})  s(\vec{z})$.
\smallskip
		
\noindent	($e_5.$) \[ \begin{array}{rcl}
		\mathfrak{M} \models_s \vec{x} \botc{\vec{z}} \vec{y}	 &  & \mathfrak{M} \models_s \vec{u} \botc{\vec{z}, \vec{x}} \vec{y}  \\
		     	\Downarrow	 \;\;\;\;\;\;\;\;\;		                &  & 				\;\;\;\;\;\;\;\;\;\;\;\;\; \Downarrow \\
		s(\vec{x}) \pureindep[s(\vec{z})] s(\vec{y}) &   & s(\vec{u}) \pureindep[s(\vec{z}), s(\vec{x})] s(\vec{y})    \\  
															  &  \Downarrow & \\
							& s(\vec{x}) s(\vec{u}) \pureindep[s(\vec{z})] s(\vec{y})	& \;\;\;\;\;\, [\text{By Transitivity}]  \\
															  &  \Downarrow & \\
						    & s(\vec{u}) s(\vec{x}) \pureindep[s(\vec{z})] s(\vec{y})	& \;\;\;\;\; \\     
		  													  &  \Downarrow & \\
							& s(\vec{u}) \pureindep[s(\vec{z})] s(\vec{y}) & \;\;\;\;\;\,  [\text{By Monotonicity}] \\
		  													  &  \Downarrow & \\
		  					&  \mathfrak{M} \models_s \vec{u} \botc{\vec{z}} \vec{y} & \\
		
\end{array} \]
\smallskip
		
\noindent	($f_5.$) \[ \begin{array}{rcl}
	\;\;\;\;\;\;\;\;\;\;\;\;	\mathfrak{M} \models \vec{y} \botc{\vec{z}} \vec{y} &      & \mathfrak{M} \models_s \vec{z}  \vec{x} \botc{\vec{y}} \vec{u}   \\
 				\Downarrow	 \;\;\;\;\;\;\;\;\;		                &  & 				\;\;\;\;\;\;\;\;\;\;\;\;\; \Downarrow \\
		s(\vec{y}) \pureindep[s(\vec{z})] s(\vec{y}) &      & s(\vec{z}) s(\vec{x}) \pureindep[s(\vec{y})] s(\vec{u})  \\  
     			\Downarrow	 \;\;\;\;\;\;\;\;\;		                &  & 				\;\;\;\;\;\;\;\;\;\;\;\;\; \Downarrow \\
		s(\vec{y}) \pureindep[s(\vec{z})] s(\vec{y}) & 	& s(\vec{x}) \pureindep[s(\vec{y}), s(\vec{z})] s(\vec{u})  \;\;\;\;\;\, [\text{By Transitivity}]  \\
     			\Downarrow	 \;\;\;\;\;\;\;\;\;		                &  & 				\;\;\;\;\;\;\;\;\;\;\;\;\; \Downarrow \\
		s(\vec{y}) \pureindep[s(\vec{z})] s(\vec{u}) & 	& s(\vec{x}) \pureindep[s(\vec{y}), s(\vec{z})] s(\vec{u})  \;\;\;\;\;\,  \\
     			\Downarrow	 \;\;\;\;\;\;\;\;\;		                &  & 				\;\;\;\;\;\;\;\;\;\;\;\;\; \Downarrow \\
		s(\vec{y}) \pureindep[s(\vec{z})] s(\vec{u}) & 	& s(\vec{x}) \pureindep[s(\vec{z}), s(\vec{y})] s(\vec{u}) \;\;\;\;\; \\     
		  													  &  \Downarrow & \\
   	     					&   s(\vec{x}) \pureindep[s(\vec{z})] s(\vec{u}) & \;\;\;\;\; [\text{By what we showed in (e$_5$.)}]\\
		  													  &  \Downarrow & \\
		  			 						&  \mathfrak{M} \models_s \vec{x} \botc{\vec{z}} \vec{u} & \\
		
	\end{array}
		 \]
\smallskip

\noindent	($g_5.$) \[ \begin{array}{rcl}
		 &\;\;\, \mathfrak{M} \models_s \vec{x} \botc{\vec{z}} \vec{y}  \text{ and } \mathfrak{M} \models_s \vec{x}  \vec{y} \botc{\vec{z}} \vec{u}           &  \\
		        											  &  \Downarrow & \\
		 & s(\vec{x}) \pureindep[s(\vec{z})] s(\vec{y}) \text{ and }	s(\vec{x})s(\vec{y}) \pureindep[s(\vec{z})] s(\vec{u})	       &  \\          
		  													  &  \Downarrow & \\
		  													  &  s(\vec{x}) \pureindep[s(\vec{z})] s(\vec{y})s(\vec{u}) & \;\;\;\;\; [\text{By Exchange}]\\
		  													  &  \Downarrow & \\
		  													  &  \mathfrak{M} \models_s \vec{x} \botc{\vec{z}} \vec{y}  \vec{u} & \\
		
	\end{array}
		 \]
\smallskip
	
\noindent	($h_5.$) Obvious.
		
\end{proof}

	The system $\mathrm{AIRACIndL}(\pureindep)$ is not in general complete. In fact, in function of the validities that the pre-independence relation determines, one may need to add axioms to the deductive system. For example, several forms of triviality may occur, and our deductive system does not account for them. In some cases the axiomatization may even not be finite or recursive.  As in the case of statistics and database theory, the question of completeness for the conditional independence atom is a non-trivial one.

\section{Conclusion}

	We generalized the results of \cite{PaoliniVaananen1} to the framework of abstract independence relations for an arbitrary $\mathrm{AEC}$, which subsumes most of the cases of independence of interest in model theory. We introduced the notion of federated pre-independence relation and studied important examples of this form of independence. We showed that any $\omega$-homogenous non-trivial pregeometry is federated (modulo a finite localization), and used this result to deduce that in any first-order stable theory that admits non-trivial regular types forking independence is federated (over some set of parameters). Finally, we characterized federation and existence of an algebraic point as the model-theoretic analog of the form of independence studied in independence logic and statistics, proving that the implication problem for a pre-independence relation $\pureindep$ is solvable with respect to the deductive system that axiomatizes independence in team semantics if and only if $\pureindep$ is federated and admits an algebraic point.

\end{document}